\newtheorem{thm}{Theorem}
\newtheorem{lemma}{Lemma}
\newtheorem{corollary}{Corollary}
\newtheorem{example}{Example}
\newtheorem*{remark}{Remark}
\newcommand{\nc}{\newcommand}
\nc{\Ra}{\Rightarrow}
\nc{\ra}{\rightarrow}
\nc{\ora}{\overrightarrow}
\nc{\ol}{\overline}
\nc{\de}{\delta}
\nc{\De}{\Delta}
\nc{\p}{\partial}
\nc{\ex}{\exists}
\nc{\nex}{\nexists}
\nc{\fa}{\forall}
\nc{\Ea}{\Leftrightarrow}
\nc{\ea}{\leftrightarrow}
\nc{\ub}{\underbrace}
\nc{\Rn}{\mathbb{R}^n}
\nc{\Rne}{\mathbb{R}^{n+1}}
\nc{\Rnm}{\mathbb{R}^{n-1}}
\nc{\bR}{\mathbb{R}}
\nc{\bC}{\mathbb{C}}
\nc{\bN}{\mathbb{N}}
\nc{\bZ}{\mathbb{Z}}
\nc{\bfx}{\textbf{\textit{x}}}
\nc{\bfX}{\textit{\textbf{X}}}
\nc{\bfY}{\textbf{\textit{Y}}}
\nc{\bfZ}{\textbf{\textit{Z}}}
\nc{\bfy}{\textbf{\textit{y}}}
\nc{\bfa}{\textbf{\textit{a}}}
\nc{\bfv}{\textbf{\textit{v}}}
\nc{\D}{\Delta}
\nc{\Dn}{\Delta^n}
\nc{\lx}{\lambda(x)}
\nc{\ba}{\textbf{\textit{a}}}
\nc{\Sn}{\S^n}
\nc{\Snm}{\S^{n-1}}
\nc{\Sne}{\S^{n+1}}
\nc{\na}{\nabla}
\nc{\deri}[2]{\frac{d #1}{d #2}}
\nc{\ph}[1]{\phi\left( #1\right)}
\nc{\ft}{_{n=1}^\infty}
\nc{\gs}{\sum_{n=1}^\infty}
\nc{\qf}[2]{\langle #1 #2,#2\rangle}
\nc{\mc}[1]{\mathcal{#1}}
\nc{\conj}[1]{\overline{#1}}
\title{Rational inner functions and their Dirichlet type norms}
\author{Linus Bergqvist}
\date{}
\begin{document}

\maketitle

\begin{abstract}
We study membership of rational inner functions in Dirichlet-type spaces in polydisks. In particular, we prove a theorem relating such inclusions to $H^p$ integrability of partial derivatives of a RIF, and as a corollary we prove that all rational inner functions on $\mathbb{D}^n$ belong to $\mathcal{D}_{1/n, \ldots ,1/n}(\mathbb{D}^n)$. Furthermore, we show that if $1/p \in \mathcal{D}_{\alpha,...,\alpha}$, then the RIF $\tilde{p}/p \in \mathcal{D}_{\alpha+2/n,...,\alpha+2/n}$. Finally we illustrate how these results can be applied through several examples, and how the Łojasiewicz inequality can sometimes be applied to determine inclusion of $1/p$ in certain Dirichlet-type spaces.
\end{abstract}

\section{Introduction}

In this paper, we address aspects of the classical problem of determining how boundary singularities of holomorphic functions defined on domains of $\mathbb{C}^n$ affect their integrability and the integrability of their derivatives. In particular, we will do this for a special class of rational functions defined on the $n$-dimensional polydisc
$$
\mathbb{D}^n = \{z: |z_j|<1, j=1 \ldots n \},
$$
namely rational inner functions, or RIFs for short. A bounded holomorphic function $f: \mathbb{D}^n \ra \mathbb{C}$ is said to be inner if $|f(z)| = 1$ for almost every $z \in \mathbb{T}^n$, and a rational inner function is, as the name implies, a rational function $f=q/p$ where $q,p \in \mathbb{C}[z_1 , \ldots, z_n]$, which is also inner. Inner functions are very important in function theory on the unit disc, among other reasons, because of the classical inner-outer factorization of one variable functions in the Hardy space. Although such a factorization does not exist in higher dimensions, RIFs still play an important role, for example since bounded holomorphic functions on $\mathbb{D}^n$ can be approximated locally-uniformly by constant multiples of RIFs (see \cite[Theorem $5.2.5$ $b)$ and $5.5.1$]{Rudin}).

In this paper we are interested in determining integrability properties of higher order partial derivatives of RIF:s, and since a rational function whose denominator has no zeros on $\overline{\mathbb{D}^n}$ will be smooth on $\mathbb{D}^n$, the interesting cases are the ones where the denominator $p(z)$ vanishes on the boundary of $\mathbb{D}^n$. Since we are dealing with inner functions, it turns out that the numerator and denominator must vanish at the same points, that is $Z(p) \cap \overline{\mathbb{D}^n} = Z(q) \cap \overline{\mathbb{D}^n}$. However since we are working with several variables, this often happens despite $p$ and $q$ not sharing a common factor, which severely complicates the question of determining boundary regularity and integrability of both the RIF and its derivatives. When we are talking about the singular set of a RIF we will mean the zero set of its denominator on the boundary of $\mathbb{D}^n$. 

In recent years, several authors have studied integrability and regularity of rational functions on the polydisc. There are many different ways to do this, and to define regularity; one could, for example, investigate area integrability or boundary integrability of the function, or the same for the partial derivatives up to some given order. In \cite{KNE}, Knese gives an algebraic classification of the ideal of polynomials $p$ such that $q/p$ lies in $L^2(\mathbb{T}^2)$ for a fixed polynomial $q$, and furthermore proves that rational inner functions have non-tangential limits everywhere on the $n-$torus. In \cite{LMS} and \cite{level}, a thorough analysis of the unimodular level sets and singular sets of RIFs gives a fairly complete understanding of the $L^p$ integrability on $\mathbb{T}^2$ of the partial derivatives of RIFs in $2$ variables. For instance it is shown that $\partial_{z_1} \phi$ and $\partial_{z_2} \phi$ have the same integrability properties. Furthermore, this is used to find a lower bound of $\alpha$ for inclusion for the RIF in certain Dirichlet type spaces $\mathcal{D}_{\alpha, \alpha}$.

However, it is shown in \cite{SRIF} that a lot of important properties are lost in higher dimensions. For example, in two variables, the singular sets are isolated points, and the unimodular level sets passing through these singularities are curves parametrizable by analytic functions (See \cite{level}). In dimension $d \geq 3$  this is no longer true; the singular set is only guaranteed to be contained in an algebraic set of dimension $n-2$; so for example in dimension $3$ the singular set can be a union of points and curves (see Prop. 3.6 in \cite{AgMcS}), and the level sets are no longer guaranteed to even be continuous. This makes it difficult to find higher dimensional equivalents of many of the methods used in \cite{LMS}. Furthermore, in \cite{SRIF} it is also shown that in higher dimensions it is no longer true that all partial derivatives must have the same $H^p$ integrability.

Despite these limitations, in this article we prove certain results regarding regularity of the mixed partial derivative of RIFs in arbitrary dimension by proving inclusion for RIFs in certain Dirichlet type spaces; a problem more difficult than that addressed in \cite{LMS} in the sense that we are dealing with both higher dimensions and higher order derivatives. Recall that $\mathcal{D}_{\alpha_1, \ldots, \alpha_n}(\mathbb{D}^n)$ is the space of holomorphic functions $f(z_1, \ldots, z_n)$  on $\mathbb{D}^n$ for which
\begin{equation} \label{pow_norm}
\| f \|_{\alpha_1, \ldots, \alpha_n}^2 := \sum_{k_n=0}^\infty \ldots \sum_{k_1=0}^\infty (1+k_1)^{\alpha_1} \cdots (1+k_n)^{\alpha_n} |\hat{f}(k_1, \ldots k_n)|^2
\end{equation}
is finite, which is equivalent to requiring that
$$
\int_{\mathbb{D}^n} \left| \frac{\partial^n}{\partial z_1 \cdots \partial z_n} (z_1 \cdots z_n f(z))\right|^2 dA_{\alpha_1}(z_1) \cdots dA_{\alpha_n}(z_n) < \infty,
$$
where $dA_{\alpha_j}(z_j) = (1-|z_j|)^{1-\alpha_j} dA(z_j)$.

In particular, we show that for an RIF $\phi$, we have that
$$
\frac{\partial \phi}{\partial z_k} \in H^{\alpha_k}(\mathbb{D}^n), \text{ for } k=1, \ldots, n  \Ra \phi \in \mathcal{D}_{c_1 \alpha_1, \ldots, c_n \alpha_n},
$$
where $\sum_{i=1}^n c_i = 1$.
In particular, since all partial derivatives of all RIFs lie in $H^1(\mathbb{D}^n)$ (see \cite{SRIF}), this implies that all RIFs lie in $\mathcal{D}_{1/n, \ldots, 1/n}$, and so, they all have strictly better integrability properties than just $H^2$-regularity. Also, by applying the above result to RIFs whose partial derivatives have different $H^p$ integrability, we demonstrate through examples how this result can be applied to obtain better regularity in certain directions at the expense of regularity in the others. 
Finally we show that for a polynomial $p$ with no zeros in the polydisc, if $1/p \in \mathcal{D}_{\overline{\alpha}}$, then the RIF $ \phi = \tilde{p}/p$ corresponding to $p$ must lie in $\mathcal{D}_{\overline{\alpha}+2/n}$. This essentially means that the improvement in Dirichlet type integrability for a RIF obtained by the zeros of $\tilde{p}$ mitigating the zeros of $p$ will always correspond to at least a parameter increase of $2/n$ in each variable. Also, since the Dirichlet type norm of $1/p$ is often easier to calculate, this gives a useful method of determining a lower bound on the regularity of RIFs.

\section{Preliminaries and notation}

\subsection{Dirichlet type spaces}

\bigskip

Recall that the \emph{Dirichlet type spaces} $\mathcal{D}_{\overline{\alpha}}(\mathbb{D}^n)$ are reproducing kernel Hilbert spaces with reproducing kernel given by
$$
\sum_{k_1, \ldots, k_n=0}^\infty \frac{z_1^{k_1} \cdots z_n^{k_n}}{(k_1+1)^{\alpha_1} \cdots {k_n+1}^{\alpha_n}},
$$
consisting of holomorphic functions $f: \mathbb{D}^n \ra \mathbb{C}$ whose norm, $\| f \|_{\overline{\alpha}}$, given by \eqref{pow_norm} where $\overline{\alpha} = (\alpha_1, \ldots, \alpha_n) \in \mathbb{R}^n$, is finite. For a scalar $\alpha \in \mathbb{R}$, we will denote by $\mathcal{D}_\alpha(\mathbb{D}^n)$ the \emph{isotropic Dirichlet type space} $\mathcal{D}_{\alpha, \ldots, \alpha}(\mathbb{D}^n)$. Furthermore, for $\overline{\alpha}$ with all $\alpha_i \leq 0$ an equivalent norm for $\mathcal{D}_{\overline{\alpha}}$ is given by
$$
\int_{\mathbb{D}^n} |f(z)|^2 \prod_{i=1}^n (1 - |z_i|^2)^{-1-\alpha_i} dA(z)
$$
(see \cite{Kap}). Note that for $\alpha = -1$, $\mathcal{D}_\alpha$ is the Bergman space, for $\alpha=0$, we get $H^2$, and for $\alpha=1$ we get the ordinary Dirichlet space. Furthermore, if all $\alpha_j > 1$, then $\mathcal{D}_{\overline{\alpha}}$ is in fact an algebra, and if all $\alpha_j \leq 0$, then the multiplier algebra of $\mathcal{D}_{\overline{\alpha}}$ is $H^\infty(\mathbb{D}^n)$ (see \cite{JupRed}). This will be used in the proof of Theorem \ref{LMS11.3}. For $\overline{\alpha}$ with $0 < \alpha_j \leq 1$, the situation is more complicated, and $H^\infty(\mathbb{D}^n)$ is a proper subset of the multiplier algebra (See \cite{Perfekt} for more about the multiplier algebra of Dirichlet type spaces). 

From the power series norm, we immediately have that polynomials are dense in all Dirichlet type spaces, and we see that if $\alpha_i \leq \beta_i$ for $i  = 1, \ldots n$, then $\mathcal{D}_{\overline{\beta}} \subset \mathcal{D}_{\overline{\alpha}}$. 

Furthermore, we will denote by $D_\alpha(\mathbb{D})$ the one variable Dirichlet type space with parameter $\alpha$, and the norm by $\| f \|_{D_\alpha}$, and we will denote by $\| f \|_{p;k}$ the norm $\| f \|_{\alpha_1, \ldots. \alpha_n}$, where $\alpha_k = p,$ and $\alpha_i = 0$ for all $i \neq k$, and $\mathcal{D}_{(p;k)}(\mathbb{D}^n)$ is the Dirichlet type space corresponding to this norm. Also, for $z = (z_1, \ldots, z_n)$, and $\hat{z} \in \mathbb{T}^{n-1}$, we will denote by $f_{\hat{z}}(z_k)$ the one variable function obtained by keeping $(z_1, \ldots, z_{k-1}, z_{k+1}, \ldots z_n) = \hat{z}$ fixed. Finally, we will denote by $(z_k; \hat{z})$ the point $(z_1, \ldots, z_k, \ldots z_n)$, where
$$
(z_1, \ldots, z_{k-1}, z_{k+1}, \ldots z_n) = \hat{z}.
$$

\subsection{Rational Inner Functions}

\bigskip

It is known that a RIF defined on $\mathbb{D}^n$ is necessarily of the form
$$
c z_1^{k_1} \cdots z_n^{k_n} \frac{\tilde{p}}{p}
$$ where $p(z)$ is a polynomial of multi-degree $(d_1, \ldots, d_n)$ with no zeros in the polydisc, $\tilde{p}(z)$ is the reflection of $p$ given by 
$$
\tilde{p}(z) := z_1^{d_1} \cdots z_n^{d_n} \overline{p \left(\frac{1}{z_1}, \ldots, \frac{1}{z_n} \right)},
$$  
and $c$ is a unimodular constant. (See \cite[Thm 5.2.5 $a)$]{Rudin}). 

For the remainder of this paper, we will only work with RIFs of the form $\tilde{p}/p$, but the same results for RIFs with monomial factors follow immediately once the results for RIFs without monomial factors are known. Also, we will assume that the polynomial $p$ is \emph{atoral}, which for the purpose of this article means that $p$ is not constant in any variable (all variables are represented in the polynomial) and that $\tilde{p}$ and $p$ have no common factors. Also, as stated in the introduction, we will assume that $p(z)$ has zeros on $\overline{\mathbb{D}^n}$ since otherwise the results are trivial. In fact, for such a polynomial $Z(p) \cap \overline{\mathbb{D}^n} = Z(p) \cap \mathbb{T}^n$ since a zero on $\overline{\mathbb{D}^n} \setminus \mathbb{T}^n$ would imply that $p$ and $\tilde{p}$ has a common factor (see Lemma $10.1$ in \cite{KNE}). This is one of the reasons why determining inclusion in $L^p(\mathbb{T}^n)$ is more interesting in this context than determining inclusion in $L^p(\partial \mathbb{\mathbb{D}}^n)$.

Note that a RIF in one variable is just a finite Blaschke product, and since for a RIF $\phi = \overline{p}/p$ on $\mathbb{D}^n$ where $p$ is a polynomial of multi-degree $(d_1, \ldots, d_n)$, we have that the one variable function $\phi_{\hat{z}}(z_k)$ is both inner and rational, it will generically be a Blaschke product of degree as $d_k$.

\section{General results for RIF:s}

In this section we prove a Theorem relating $H^p$ integrability of the partial derivatives of a RIF with inclusion in certain Dirichlet type spaces of the RIF itself. As a corollary, we see that all RIF:s on $\mathbb{D}^n$ lie in $\mathcal{D}_{1/n}(\mathbb{D}^n)$. 

\begin{lemma} \label{embe}
Let $f \in \mathcal{D}_{\alpha_1, \ldots, \alpha_n}(\mathbb{D}^n)$ and let $p_j > 1$ for $j = 1, \ldots, n$. Then 
$$
\| f \|_{\alpha_1, \ldots, \alpha_n}^2 \leq \prod_{i=1}^n \| f \|_{c_i \alpha_i; i}^{1/c_i},
$$
where $c_n = \prod_{j=1}^{n-1} p_j/(p_j - 1)$ and $c_l = p_l \prod_{j=1}^{l-1} p_j/(p_j - 1)$ for $l \neq n$.
\end{lemma}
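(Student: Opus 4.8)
The plan is to recognize the asserted inequality as the generalized Hölder inequality applied on the Fourier side. Set $a_{\mathbf{k}} = |\hat f(k_1,\ldots,k_n)|^2 \ge 0$ for $\mathbf{k}=(k_1,\ldots,k_n)\in\mathbb{N}^n$, and regard $(a_{\mathbf{k}})_{\mathbf{k}}$ as a nonnegative measure on $\mathbb{N}^n$. By the defining formula \eqref{pow_norm},
$$
\| f \|_{\alpha_1,\ldots,\alpha_n}^2 = \sum_{\mathbf{k}} \prod_{i=1}^n (1+k_i)^{\alpha_i}\, a_{\mathbf{k}}, \qquad \| f \|_{c_i\alpha_i;i}^2 = \sum_{\mathbf{k}} (1+k_i)^{c_i\alpha_i}\, a_{\mathbf{k}},
$$
the latter being the same sum with only the $i$-th weight retained. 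Putting $g_i(\mathbf{k}) = (1+k_i)^{\alpha_i}$, the summand on the left is $\big(\prod_i g_i(\mathbf{k})\big) a_{\mathbf{k}}$ and $g_i^{c_i} = (1+k_i)^{c_i\alpha_i}$, so the claim is precisely
$$
\sum_{\mathbf{k}} \Big(\prod_{i=1}^n g_i\Big) a_{\mathbf{k}} \le \prod_{i=1}^n \Big(\sum_{\mathbf{k}} g_i^{c_i}\, a_{\mathbf{k}}\Big)^{1/c_i},
$$
i.e.\ each one-variable squared norm $\| f\|_{c_i\alpha_i;i}^2$ is raised to the power $1/c_i$.

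First I would check admissibility of the exponents. Since every $p_j>1$, each factor $p_j/(p_j-1)$ exceeds $1$, so all the $c_i>1$; the generalized Hölder inequality with exponents $(c_1,\ldots,c_n)$ then applies as soon as $\sum_{i=1}^n 1/c_i = 1$. Verifying this identity is the heart of the matter. Writing $q_j := (p_j-1)/p_j = 1-1/p_j$ and $P_l := \prod_{j=1}^{l-1} q_j$ (empty product $P_1=1$), the definitions give $1/c_l = (1-q_l)P_l = P_l - P_{l+1}$ for $l<n$ and $1/c_n = P_n$. Summing telescopes: $\sum_{l=1}^{n-1}(P_l - P_{l+1}) + P_n = (P_1 - P_n) + P_n = P_1 = 1$, as required.

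With the exponent identity in hand the inequality follows at once from generalized Hölder. Equivalently --- and this is what makes the otherwise opaque nested form of the $c_i$ transparent --- one may reach the same bound by $n-1$ successive applications of the two-term Hölder inequality: first split off the $k_1$-weight using conjugate exponents $p_1$ and $p_1/(p_1-1)$, isolating $\| f\|_{c_1\alpha_1;1}$ and leaving a residual sum in the remaining variables with each $\alpha_i$ replaced by $\tfrac{p_1}{p_1-1}\alpha_i$; then repeat with $p_2$, and so on. Carrying out this recursion, the weight on $k_l$ accumulates to exactly $c_l\alpha_l$ and the exponent it is raised to accumulates to exactly $1/c_l$, matching the claimed product. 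I expect no analytic obstacle here --- all terms are nonnegative, so Hölder holds in $[0,\infty]$ and the case of an infinite right-hand side is trivial --- the only real work being the bookkeeping that the nested products of $p_j/(p_j-1)$ assemble correctly, which the telescoping computation above already controls.
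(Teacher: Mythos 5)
Your proposal is correct and takes essentially the same route as the paper: the paper's proof is an iterated two-term Hölder argument on the Fourier coefficients, peeling off one variable at a time, which is precisely the standard derivation of the generalized Hölder inequality you apply in one shot, and your telescoping verification of $\sum_{i=1}^n c_i^{-1} = 1$ makes explicit the computation the paper relegates to its remark. One harmless point: like the paper's own proof, what you actually establish is $\| f \|_{\alpha_1, \ldots, \alpha_n}^2 \leq \prod_{i=1}^n \bigl( \| f \|_{c_i \alpha_i; i}^2 \bigr)^{1/c_i}$, i.e.\ the right-hand side carries squared norms, so the exponents in the stated inequality are off by a factor of two; this discrepancy (present in the paper as well) is immaterial in all later applications, where only finiteness of the factors is used.
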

\begin{remark}
\normalfont
Note that the map ($p_1, \ldots p_{n-1}) \mapsto (c_1, \ldots c_n)$ is a bijection from $\{(p_1, \ldots, p_{n-1}): p_i > 1 \quad i=1, \ldots, n-1 \}$ to
$$
\left\{ (c_1, \ldots, c_n): 1 = \sum_{i=1}^n c_i^{-1} \text{ and } c_i > 0 \quad i=1, \ldots, n \right\},
$$
and so the above inequality is true for every choice of $(c_1, \ldots, c_n)$ in the above set.
\end{remark}
\begin{proof}
By applying Hölder's inequality with $p = p_1$ and $q = p_1/(p_1-1)$, we get that
\begin{multline*}
\| f \|_{\alpha_1, \ldots, \alpha_n}^2 = \sum_{k_1 = 0}^\infty \ldots \sum_{k_n = 0}^\infty (1+k_1)^{\alpha_1} \cdots (1+k_n)^{\alpha_n} |a_{k_1, \ldots , k_n}|^2 \\
= \sum_{k_1 = 0}^\infty \ldots \sum_{k_n = 0}^\infty (1+k_1)^{\alpha_1} |a_{k_1, \ldots , k_n}|^{2 (1/p_1)} (1+k_2)^{\alpha_2} \cdots (1+k_n)^{\alpha_n} |a_{k_1, \ldots , k_n}|^{2(1 - 1/p_1)} \\
\leq \|f\|_{p_1; 1}^{1/p_1} \left(\sum_{k_2 = 0}^\infty \ldots \sum_{k_n = 0}^\infty (1+k_2)^{q_1 \alpha_2} \cdots (1+k_n)^{q_1 \alpha_n} |a_{k_1, \ldots , k_n}|^2 \right)^{1/q_1}.
\end{multline*}
By applying the same procedure with Hölder's inequality to the sum in parenthesis, but with $p_2$ instead, we see that
\begin{multline*}
\left(\sum_{k_2 = 0}^\infty \ldots \sum_{k_n = 0}^\infty (1+k_2)^{q_1 \alpha_2} \cdots (1+k_n)^{q_1 \alpha_n} |a_{k_1, \ldots , k_n}|^2 \right)^{1/q_1} \\
\leq \| f \|_{p_2 q_1; 2}^{\frac{1}{p_2 q_1}} \left(\sum_{k_3 = 0}^\infty \ldots \sum_{k_n = 0}^\infty (1+k_3)^{q_2 q_1 \alpha_3} \cdots (1+k_n)^{q_2 q_1 \alpha_n} |a_{k_1, \ldots , k_n}|^2 \right)^{\frac{1}{q_1 q_2}}.
\end{multline*}
By doing this inductively for $p_3, \ldots p_{n-1}$, we see that
\begin{multline*}
\| f \|_{\alpha_1, \ldots, \alpha_n}^2 \\
\leq \| f \|_{p_1 \alpha_1; 1}^{1/p_1} \| f \|_{p_2 q_1 \alpha_2; 2}^{1/(p_2 q_1)} \| f \|_{p_3 q_2 q_1 \alpha_3; 3}^{1/(p_3 q_2 q_1)} \cdots \| f \|_{p_{n-1} q_{n-2} \cdots q_1 \alpha_{n-1}; n-1}^{1/(p_{n-1} q_{n-2} \cdots q_1)} \| f \|_{q_{n-1} \cdots q_1 \alpha_n; n}^{1/(q_{n-1} \cdots q_1)}.
\end{multline*}
By using $q_j = p_j/(p_j-1)$, this finishes the proof.
\end{proof}

\begin{corollary}
Let $f \in \mathcal{D}_{\alpha_1, \ldots, \alpha_n}(\mathbb{D}^n)$. Then
$$
\| f \|_{\alpha_1, \ldots, \alpha_n}^2 \leq \prod_{i=1}^n \| f \|_{n \alpha_i; i}^{1/n}.
$$
\end{corollary}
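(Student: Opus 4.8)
The plan is to derive this Corollary as the special symmetric case of Lemma~\ref{embe}. The Lemma gives, for any $(c_1,\ldots,c_n)$ with $c_i>0$ and $\sum_{i=1}^n c_i^{-1}=1$, the bound $\|f\|_{\alpha_1,\ldots,\alpha_n}^2 \leq \prod_{i=1}^n \|f\|_{c_i\alpha_i;i}^{1/c_i}$, as noted in the Remark following the Lemma. So the entire task reduces to choosing the right constants and checking they lie in the admissible set.

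First I would take the symmetric choice $c_1 = c_2 = \cdots = c_n = n$. This is the natural candidate because the exponents on the right-hand side should all match the target statement, where each factor is $\|f\|_{n\alpha_i;i}^{1/n}$. I would verify that this choice is admissible for the Lemma by checking the constraint: $\sum_{i=1}^n c_i^{-1} = \sum_{i=1}^n \tfrac{1}{n} = 1$, and clearly each $c_i = n > 0$. Hence by the Remark the inequality applies verbatim with $c_i\alpha_i = n\alpha_i$ and $1/c_i = 1/n$, yielding exactly
\[
\|f\|_{\alpha_1,\ldots,\alpha_n}^2 \leq \prod_{i=1}^n \|f\|_{n\alpha_i;i}^{1/n}.
\]

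Since the Remark asserts a bijection onto the full admissible set, I do not even need to recover the specific $p_j$ producing these $c_i$; but for completeness I could note that the symmetric point $c_i=n$ corresponds under the stated bijection to $p_j = n/(n-j)$ for $j=1,\ldots,n-1$, each of which exceeds $1$ as required. This makes the dependence on Lemma~\ref{embe} fully explicit rather than relying only on the Remark.

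I anticipate no real obstacle here: the only thing to watch is the indexing convention, making sure the target exponents $1/n$ and the shifted parameters $n\alpha_i$ align correctly with the Lemma's $1/c_i$ and $c_i\alpha_i$, and confirming that the hypothesis $f\in\mathcal{D}_{\alpha_1,\ldots,\alpha_n}$ is exactly what the Lemma requires. The substitution $c_i \equiv n$ is the whole content of the proof, so the argument is a one-line specialization.
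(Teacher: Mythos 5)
Your main argument is correct and is essentially the paper's own proof: both reduce the Corollary to Lemma~\ref{embe} at the symmetric point $c_1 = \cdots = c_n = n$, which satisfies $\sum_{i=1}^n c_i^{-1} = 1$; the paper just phrases this by exhibiting the parameters $p_i = n+1-i$, for which the products defining the $c_j$ telescope to $n$, while you invoke the surjectivity asserted in the Remark. However, your ``for completeness'' claim that the symmetric point corresponds to $p_j = n/(n-j)$ is wrong: since $c_1 = p_1$ (the product defining $c_1$ is empty), one needs $p_1 = n$, whereas your formula gives $p_1 = n/(n-1)$; more generally your choice yields $p_j/(p_j-1) = n/j$ and hence, e.g., $c_2 = \frac{n}{n-2}\cdot n \neq n$. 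The correct explicit parameters are the paper's $p_j = n+1-j$, $j = 1, \ldots, n-1$, giving $p_j/(p_j-1) = (n+1-j)/(n-j)$ and $c_l = (n+1-l)\cdot\frac{n}{n+1-l} = n$ for $l < n$, $c_n = \frac{n}{n-1}\cdot\frac{n-1}{n-2}\cdots\frac{2}{1} = n$. Since you explicitly do not rely on that side computation (the Remark suffices), your proof stands, but the erroneous formula should be corrected or deleted.
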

\begin{proof}
Apply Lemma \ref{embe} with $p_i = n + 1 - i$ for $i = 1, \ldots, n-1$. Then each product defining $c_j$ will telescope, and we get $c_j=n$ for every $j = 1, \ldots n$, which gives the desired inequality.
\end{proof}

In order to apply this to show inclusion of certain RIF:s in specific Dirichlet spaces, we need a few additional lemmas.

\begin{lemma} \label{onedim}

Let $b(z):= \prod_{j=1}^n b_{\alpha_j}(z)$ be a finite Blaschke product, where
$$
b_{\alpha_j}(z) = \frac{z- \alpha_j}{1- \overline{\alpha}_j z}, \quad \text{for }\alpha_j \in \mathbb{D}.
$$
Then $b$ satisfies
$$
\|b\|_{D_\mathfrak{p}}^2 \lesssim \epsilon(b)^{1-\mathfrak{p}} \quad \text{for } 0 < \mathfrak{p} < \infty ,
$$
where $\epsilon(b) := \min \{1- |\alpha_j|: 1 \leq j \leq n \}$ is the distance from the zero set of $b$ to $\mathbb{T}$ and the implied constant depends on $\mathfrak{p}$ and $\deg b$ = $n$.

\end{lemma}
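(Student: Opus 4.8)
The plan is to pass from the power-series definition of $\|\cdot\|_{D_\mathfrak{p}}$ to an equivalent weighted integral of a high enough derivative, and then to reduce everything to single-pole integrals that are evaluated by the Forelli--Rudin estimate. Fix an integer $N$ with $2N > \mathfrak{p}$, so that the exponent $2N-1-\mathfrak{p}$ exceeds $-1$. A standard Beta-function computation of the radial integrals (the one-variable analogue of the integral norm recorded after \eqref{pow_norm}) then gives
$$
\|f\|_{D_\mathfrak{p}}^2 \asymp \sum_{m=0}^{N-1} |\hat f(m)|^2 + \int_{\mathbb{D}} |f^{(N)}(z)|^2 (1-|z|^2)^{2N-1-\mathfrak{p}} \, dA(z),
$$
with constants depending only on $N$ and $\mathfrak{p}$. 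For $f=b$ the low-order terms are harmless, since $\sum_{m=0}^{N-1}|\hat b(m)|^2 \le \|b\|_{H^2}^2 \le 1$.

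Next I would estimate the derivatives of $b$. Writing each factor by partial fractions as $b_{\alpha}(z) = -\overline{\alpha}^{-1} + (1-|\alpha|^2)\overline{\alpha}^{-1}(1-\overline{\alpha}z)^{-1}$ yields $|b_\alpha^{(m)}(z)| \lesssim_m (1-|\alpha|^2)\,|1-\overline{\alpha}z|^{-(m+1)}$ for $m \ge 1$. Applying the Leibniz rule to $b = \prod_{j=1}^n b_{\alpha_j}$ and using $|b_{\alpha_i}| \le 1$ for the undifferentiated factors, one bounds $|b^{(N)}(z)|$ by a finite ($n,N$-dependent) sum of products $\prod_{j} (1-|\alpha_j|^2)\,|1-\overline{\alpha}_j z|^{-(m_j+1)}$ taken over multi-indices with $m_j \ge 1$ and $\sum_j m_j = N$. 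The \emph{diagonal} terms, where all $N$ derivatives land on a single factor, are $(1-|\alpha_j|^2)\,|1-\overline{\alpha}_j z|^{-(N+1)}$.

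The core computation is the single-pole integral. By the Forelli--Rudin estimate $\int_{\mathbb{D}} (1-|z|^2)^{s}\,|1-\overline{\alpha}z|^{-t}\,dA(z) \asymp (1-|\alpha|^2)^{2+s-t}$, valid here since $t-s-2 = \mathfrak{p}+1 > 0$, the diagonal term for the $j$-th factor contributes
$$
(1-|\alpha_j|^2)^2 \int_{\mathbb{D}} \frac{(1-|z|^2)^{2N-1-\mathfrak{p}}}{|1-\overline{\alpha}_j z|^{2N+2}}\, dA(z) \asymp (1-|\alpha_j|^2)^{1-\mathfrak{p}}.
$$
The remaining cross terms reduce to multi-pole versions of the same integral; each should be dominated by a diagonal term attached to whichever of its poles lies closest to $\mathbb{T}$, so that after summing the finitely many contributions one obtains $\int_{\mathbb{D}} |b^{(N)}|^2 (1-|z|^2)^{2N-1-\mathfrak{p}}\,dA \lesssim_{n,\mathfrak{p}} \sum_{j=1}^n (1-|\alpha_j|^2)^{1-\mathfrak{p}}$.

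Finally, since $1-|\alpha_j|^2 \ge 1-|\alpha_j| \ge \epsilon(b)$ for every $j$ and $x \mapsto x^{1-\mathfrak{p}}$ is nonincreasing when $\mathfrak{p}\ge 1$, each summand is at most $\epsilon(b)^{1-\mathfrak{p}}$; the low-order terms ($\le 1 \le \epsilon(b)^{1-\mathfrak{p}}$ in this regime) are absorbed, giving $\|b\|_{D_\mathfrak{p}}^2 \lesssim_{n,\mathfrak{p}} \epsilon(b)^{1-\mathfrak{p}}$ — the range in which the bound genuinely reflects the blow-up and the closest zero dominates. I expect the main obstacle to be the bookkeeping for the cross terms: one must verify that the multi-pole integrals arising when the $N$ derivatives are spread over several factors never produce a worse power of $\epsilon(b)$ than the diagonal terms, i.e.\ that the entire estimate is governed by the single zero nearest the torus.
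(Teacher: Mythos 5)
Your proposal is correct in substance but follows a genuinely different route from the paper. The paper does not prove the estimate from scratch: it quotes Lemma 9.3 of \cite{LMS} (the case $1\le\mathfrak{p}\le 2$), argues that the proof there extends below $\mathfrak{p}=1$, and then handles $\mathfrak{p}>2$ by induction in steps of two, combining $\|f\|_{D_{\mathfrak{p}+2}}\approx\|f'\|_{D_{\mathfrak{p}}}$ with pointwise bounds of the form $|b^{(k)}(z)|\lesssim \epsilon(b)^{-k}|b(z)|$. You instead work directly with a single high-order derivative: the equivalent norm with weight $(1-|z|^2)^{2N-1-\mathfrak{p}}$, $2N>\mathfrak{p}$, the Leibniz rule, the partial-fraction bound $|b_\alpha^{(m)}(z)|\lesssim_m (1-|\alpha|^2)|1-\overline{\alpha}z|^{-(m+1)}$, and Forelli--Rudin. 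In effect you generalize the proof of Lemma 9.3 of \cite{LMS} (which is your argument with $N=1$) so that it covers all $\mathfrak{p}<2N$ in one pass. This buys self-containedness and robustness: the paper's pointwise inequality $|b_\alpha^{(n+1)}(z)|\le C\epsilon(b_\alpha)^{-(n+1)}|b_\alpha(z)|$ cannot hold at the zero $z=\alpha$ (the right-hand side vanishes there, the left-hand side does not), and pointwise domination between two functions does not by itself yield a $D_\mathfrak{p}$-norm inequality for $\mathfrak{p}>0$; your route sidesteps both points, at the cost of redoing the base case rather than citing it.

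The one step you flag as open --- the cross terms --- does close, exactly along the lines you guess. Fix a cross term with poles $\alpha_j$, $j\in S$, and orders $m_j\ge 1$, $\sum_{j\in S}m_j=N$. Distribute the weight as $(1-|z|^2)^{2N-1-\mathfrak{p}}=\prod_{j\in S}(1-|z|^2)^{(2N-1-\mathfrak{p})m_j/N}$ and apply Hölder's inequality with exponents $N/m_j$ (their reciprocals sum to $1$) to the factors $g_j(z):=(1-|\alpha_j|^2)^2\,|1-\overline{\alpha}_jz|^{-2(m_j+1)}\,(1-|z|^2)^{(2N-1-\mathfrak{p})m_j/N}$. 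Each Hölder factor is then a single-pole integral, and Forelli--Rudin (applicable since $2N(m_j+1)/m_j-(2N-1-\mathfrak{p})-2=2N/m_j+\mathfrak{p}-1>0$) gives
$$
\int_{\mathbb{D}}g_j(z)^{N/m_j}\,dA(z)\asymp (1-|\alpha_j|^2)^{2N/m_j}\,(1-|\alpha_j|^2)^{2+(2N-1-\mathfrak{p})-2N(m_j+1)/m_j}=(1-|\alpha_j|^2)^{1-\mathfrak{p}}.
$$
Hence the cross term is $\lesssim\prod_{j\in S}(1-|\alpha_j|^2)^{(1-\mathfrak{p})m_j/N}$, a weighted geometric mean with weights summing to one, so it is at most $\max_{j\in S}(1-|\alpha_j|^2)^{1-\mathfrak{p}}\le\epsilon(b)^{1-\mathfrak{p}}$ when $\mathfrak{p}\ge1$: the pole nearest $\mathbb{T}$ governs, as you predicted.

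Finally, your argument is confined to $\mathfrak{p}\ge1$ (both the absorption of the low-order terms and the monotonicity step use $1-\mathfrak{p}\le0$), while the lemma is stated for all $0<\mathfrak{p}<\infty$. This is not a gap on your side: for $0<\mathfrak{p}<1$ the inequality is false for the norm \eqref{pow_norm}, since for a single factor $b_\alpha$ with $\alpha\to1^-$ one has $\|b_\alpha\|_{D_\mathfrak{p}}^2\ge|\hat{b}_\alpha(0)|^2=|\alpha|^2\to1$ while $\epsilon(b_\alpha)^{1-\mathfrak{p}}=(1-\alpha)^{1-\mathfrak{p}}\to0$. (The paper's claimed extension below $\mathfrak{p}=1$ rests on the one-sided bound $\epsilon(b)^{1-\mathfrak{p}}\lesssim\sum_j(1-|\alpha_j|)^{1-\mathfrak{p}}$, which points the wrong way for this conclusion.) Since Lemma \ref{hp} invokes the present lemma only after reducing to $\mathfrak{p}\ge1$ via Lemma \ref{h1}, your proof covers everything the paper actually uses.
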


The case where $1 \leq \mathfrak{p} \leq 2$ is Lemma $9.3$ in \cite{LMS}, but as the authors of that paper point out, the more general case formulated above also holds. We will now show how Lemma \ref{onedim} can be obtained from Lemma $9.3$ in \cite{LMS}. 

\begin{proof}
First of all, the proof given in \cite{LMS} works for $0< \mathfrak{p} \leq 2$, since the only time $1 \leq \mathfrak{p}$ is used is in the inequality
$$
\epsilon(b)^{1 - \mathfrak{p}} := (\min \{ 1 - |\alpha_j| \})^{1 - \mathfrak{p}} = \max \left\{ 1 - |\alpha_j|^{1 - \mathfrak{p}} \right\} \approx \sum_{j=1}^n (1-|\alpha_j|)^{1- \mathfrak{p}},
$$
on page $314$. However, for $\mathfrak{p} > 0$ we still have that
$$
\epsilon(b)^{1 - \mathfrak{p}} := (\min \{ 1 - |\alpha_j| \})^{1 - \mathfrak{p}} \lesssim \sum_{j=1}^n (1-|\alpha_j|)^{1- \mathfrak{p}},
$$
and in fact, for the remainder of that proof only inequalities are used, and so the same proof works verbatim with
$$
\epsilon(b)^{1 - \mathfrak{p}} \lesssim \sum_{j=1}^n (1-|\alpha_j|)^{1- \mathfrak{p}}
$$
instead of
$$
\epsilon(b)^{1 - \mathfrak{p}} \approx \sum_{j=1}^n (1-|\alpha_j|)^{1- \mathfrak{p}}.
$$
We will now prove the statement for general $\mathfrak{p}$ by using higher derivatives and reducing the statement to the case $0 < \mathfrak{p} \leq 2$. 

First, consider a single Blaschke factor $b_{\alpha}(z) = \frac{z- \alpha}{1- \overline{\alpha} z}$. Its derivative is given by
$$
b'_{\alpha}(z) = \frac{|\alpha|^2 - 1}{(1- \overline{\alpha} z)^2},
$$
and so
$$
b^{(n+1)}_{\alpha}(z) = \frac{n! (|\alpha|^2 - 1)(- \overline{\alpha})^n}{(1- \overline{\alpha} z)^{2+n}}.
$$
This means that
$$
\left|b^{(n+1)}_{\alpha}(z) \right| \leq \frac{C}{\epsilon(b_\alpha)^{n+1}} |b_\alpha(z)|,
$$
and thus
$$
\| b^{(n)}_\alpha \|_{D_{\mathfrak{p}}}^2 \leq \frac{C}{\epsilon(b_\alpha)^{2n}} \| b_\alpha\|_{D_{\mathfrak{p}}}^2.
$$
In the above equations the constant $C$ depends only on $\alpha$ and $n$, and is uniformly bounded in $\alpha$, but not in  $n$.

By applying $\| f \|_{D_{\mathfrak{p}+2}} \approx \| f' \|_{D_{\mathfrak{p}}}$ inductively, we see that for $\mathfrak{p} = \mathfrak{p}'+2k$, where $k$ is an integer and $0 < \mathfrak{p}' \leq 2$, we have that
$$
\| b_\alpha \|^2_{D_{\mathfrak{p}}} \cong \| b_\alpha^{(k)} \|_{D_{\mathfrak{p}'}}^2 \leq \frac{C}{\epsilon(b_\alpha)^{2k}} \| b_\alpha\|_{D_{\mathfrak{p}'}}^2 \leq \frac{C}{\epsilon(b_\alpha)^{2k}} \epsilon(b_\alpha)^{1-\mathfrak{p}'} \cong \epsilon(b_\alpha)^{1- \mathfrak{p}},
$$ 
which proves the statement for a single Blaschke factor.

Finally, for a general Blaschke product, the $k$:th derivative will be a linear combination of expressions of the form
$$
\prod_{j=0}^m b^{(n_j)}_{\alpha_j}(z),
$$
where $\sum_j n_j = k$. Since each factor satisfies a bound of the form 
$$
\left|b^{(n_j)}_{\alpha_j}(z)\right| \leq \frac{C}{\epsilon(b_{\alpha_j})^{n_j}} |b_{\alpha_j}(z)|
$$
the absolute value of each term will be bounded by $\frac{C_1}{\epsilon(b)^{k}} |b(z)|$, and thus the entire linear combination will be bounded by
$$
\frac{C_2}{\epsilon(b)^{k}} |b(z)|.
$$
We can thus apply the same induction argument as for a single Blaschke factor and the full statement follows.

\end{proof}

We will now prove a higher dimensional equivalent of the result for RIF:s in two variables which is proved in Proposition 4.4 in \cite{LMS} (see also \cite{SRIF}).

\begin{lemma} \label{h1}

Let $\phi = \tilde{p}/p$ be an RIF on $\mathbb{D}^n$ with $\deg \phi = (m_1, \ldots m_n)$. Then
$$
\left\Vert \frac{\partial \phi}{\partial z_i} \right\Vert_{H^1(\mathbb{D}^n)} = m_i. 
$$
\end{lemma}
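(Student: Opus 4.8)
The plan is to compute the $H^1$ norm of $\partial_{z_i}\phi$ by exploiting the fact that, since $\phi$ is inner, $|\partial_{z_i}\phi|$ on the torus can be related to a boundary quantity that is far more tractable than the derivative of a generic rational function. The key observation is that on $\mathbb{T}^n$ we have $|\phi|=1$ almost everywhere, and for a unimodular function the normal (here, radial in $z_i$) derivative controls the full gradient in a clean way. Concretely, I would first reduce to a one-variable computation by fixing $\hat z \in \mathbb{T}^{n-1}$ and looking at the slice $\phi_{\hat z}(z_i)$, which by the remark in the Preliminaries is generically a finite Blaschke product of degree $m_i$ in $z_i$. The $H^1(\mathbb{D}^n)$ norm of $\partial_{z_i}\phi$ should then be writable as an average over $\hat z\in\mathbb{T}^{n-1}$ of the $H^1(\mathbb{D})$ norm of $\partial_{z_i}\phi_{\hat z}$.

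For a single finite Blaschke product $B$ of degree $m$ in one variable, the crucial fact is that $\|B'\|_{H^1(\mathbb{D})}=m$ exactly. This follows because $B'$ has no zeros cancelling and, by the argument principle / Cauchy theorem, $\frac{1}{2\pi}\int_0^{2\pi}|B'(e^{i\theta})|\,d\theta$ counts the winding, which for a degree-$m$ Blaschke product equals $m$. More carefully, on $\mathbb{T}$ one has $B'(e^{i\theta})\,i e^{i\theta}=\frac{d}{d\theta}B(e^{i\theta})$, and since $|B|=1$ on $\mathbb{T}$ the quantity $\frac{d}{d\theta}\arg B(e^{i\theta})$ is nonnegative and integrates to $2\pi m$; because $|B'(e^{i\theta})|=\left|\frac{d}{d\theta}B(e^{i\theta})\right|=\frac{d}{d\theta}\arg B(e^{i\theta})$ (the modulus of the tangential derivative equals the phase derivative when $|B|=1$), integrating gives exactly $m$. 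The main steps are therefore: (i) justify the slicing identity expressing $\|\partial_{z_i}\phi\|_{H^1(\mathbb{D}^n)}$ as the average of the one-variable slice norms; (ii) identify each slice as a Blaschke product of degree $m_i$; and (iii) apply the exact one-variable computation $\|B'\|_{H^1(\mathbb{D})}=\deg B$ and integrate the constant $m_i$ over the torus.

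The main obstacle I expect is step (i)–(ii): making the slicing argument rigorous. The difficulty is that $\partial_{z_i}\phi$ evaluated on the slice is not literally $\partial_{z_i}(\phi_{\hat z})$ unless one is careful, since $\hat z$ ranges over $\mathbb{T}^{n-1}$ rather than $\mathbb{D}^{n-1}$; one must pass to radial limits and invoke that $\partial_{z_i}\phi \in H^1(\mathbb{D}^n)$ (guaranteed abstractly, and this is exactly the content being quantified) so that its boundary values exist and the slice functions are genuine $H^1(\mathbb{D})$ functions for almost every $\hat z$. A second subtlety is that the slice $\phi_{\hat z}$ has degree \emph{exactly} $m_i$ only generically; one must argue that the set of $\hat z$ where the degree drops (due to common factors appearing in the slice, or leading coefficients vanishing) has measure zero in $\mathbb{T}^{n-1}$, so that the average of $\deg \phi_{\hat z}$ over the torus is still $m_i$. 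This genericity claim relies on atorality of $p$ — the assumption that $\tilde p$ and $p$ share no common factor — which prevents the numerator and denominator of the slice from degenerating on a positive-measure set.

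Once these measure-theoretic points are settled, the integration in step (iii) is immediate: the integrand equals the constant $m_i$ almost everywhere, and averaging over the normalized Haar measure on $\mathbb{T}^{n-1}$ yields $\|\partial_{z_i}\phi\|_{H^1(\mathbb{D}^n)}=m_i$. I would model the slicing argument on the two-variable case in Proposition 4.4 of \cite{LMS}, checking that nothing in that argument is genuinely restricted to $n=2$ beyond notation, and citing \cite{SRIF} for the fact that all partial derivatives of RIFs lie in $H^1(\mathbb{D}^n)$ to guarantee the boundary values needed for the slicing to make sense.
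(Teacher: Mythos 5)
Your proposal follows essentially the same route as the paper's proof: slice along $\hat z \in \mathbb{T}^{n-1}$, note that $\phi_{\hat z}$ is a finite Blaschke product of degree $m_i$ off a null set, use the unimodularity identity $|\phi_{\hat z}'(z_i)| = \frac{\phi_{\hat z}'(z_i)}{\phi_{\hat z}(z_i)}z_i$ on $\mathbb{T}$ together with the argument principle to get the constant $m_i$ on each slice, and integrate over $\mathbb{T}^{n-1}$. Your phase-derivative formulation of the one-variable step and your attention to the measure-theoretic details (boundary values via $H^1$ membership, genericity of the slice degree) are just more explicit versions of what the paper does, so the argument is correct and not genuinely different.
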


\begin{remark}
\normalfont
Note in particular that this implies that all RIF:s have partial derivatives that lie in $H^1(\mathbb{D}^n)$.
\end{remark}

\begin{proof}
 Fix $\hat{z} \in \mathbb{T}^{n-1}$ and consider the finite Blaschke product $\phi_{\hat{z}}(z_k)$. Furthermore, $\deg \phi_{\hat{z}} = m_k$ except on a set of Lebesgue measure zero on $\mathbb{T}^{n-1}$, corresponding to the points $\hat{z} \in \mathbb{T}^{n-1}$ for which $\phi$ has a singularity for some value of $z_k$.
 Since this is a finite Blaschke product, we have that
 $$
 \left| \frac{\partial \phi}{\partial z_k}(z_1, \ldots, z_k) \right| = |\phi_{\hat{z}}'(z_k)| = \frac{\phi_{\hat{z}}'(z_k)}{\phi_{\hat{z}}(z_k)}z_k, \quad \text{for } z_1 \in \mathbb{T}. 
 $$
 So by the argument principle
 \begin{multline*}
 \left\Vert \frac{\partial \phi}{\partial z_k} \right\Vert_{H^1(\mathbb{D}^n)} = \frac{1}{(2 \pi)^n}\int_{\mathbb{T}^{n-1}} \left( \int_\mathbb{T} \frac{\phi_{\hat{z}}'(z_k)}{\phi_{\hat{z}}(z_k)}z_k |d z_k|\right) |d\hat{z}| \\
 = \frac{1}{(2 \pi)^{n-1}}\int_{\mathbb{T}^{n-1}} \left( \frac{1}{2 \pi i}\int_\mathbb{T} \frac{\phi_{\hat{z}}'(z_k)}{\phi_{\hat{z}}(z_k)} d z_k\right) |d\hat{z}| = \frac{1}{(2 \pi)^{n-1}}\int_{\mathbb{T}^{n-1}} m_k |d\hat{z}| = m_k.
 \end{multline*}
 
\end{proof}

Now, for a general RIF $\phi = \tilde{p}/p$ with $\deg \phi = (d_1, \ldots, d_n)$, the one variable function $\phi_{\hat{z}}(z_k)$ is a finite Blaschke product with $n_{\hat{z};k} := \deg \phi_{\hat{z}} \leq d_k$. Now define $\delta(\phi, \hat{z})$ as the distance from $\mathcal{Z}_{\tilde{p}} \cap \{(z_k; \hat{z}): z_k \in \mathbb{D} \}$ to $\mathbb{T}^n$. That is, if $\alpha_1, \ldots \alpha_{n_{\hat{z};k}} \in \mathbb{D}$ are the zeros of $\phi_{\hat{z}}(z_k)$, then
$$
\delta(\phi, \hat{z}) = \min_{1 \leq i \leq n_{\hat{z};k}} |1 - \alpha_i|.
$$
Now for each $x > 0$, we define
$$
\Omega_x = \{ \hat{z} \in \mathbb{T}^{n-1}: \delta(\phi, \hat{z}) < 1/x \}.
$$
The following lemma relates containment of $\partial_{z_k} \phi$ in $H^p$ to the the size of these sets (parametrized by $x$).

\begin{lemma}{[Theorem 2.1 of \cite{SRIF}]} \label{levelset}

Let $\phi = \tilde{p}/p$ be a RIF on $\mathbb{D}^n$. Then for $1 \leq \mathfrak{p} < \infty$, $\frac{\partial \phi}{\partial z_k} \in H^{\mathfrak{p}}(\mathbb{D}^n)$ if and only if
$$
\int_1^\infty \mu(\Omega_x) x^{\mathfrak{p}-2} dx < \infty.
$$
\end{lemma}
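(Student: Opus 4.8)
The plan is to reduce the $n$-dimensional statement to a one-variable estimate by slicing in the $z_k$-variable, and then to convert the resulting boundary integral into the distribution-function integral $\int_1^\infty \mu(\Omega_x) x^{\mathfrak{p}-2}\,dx$ via a layer-cake computation. First I would slice exactly as in the proof of Lemma \ref{h1}: for a.e.\ $\hat{z} \in \mathbb{T}^{n-1}$ the slice $\phi_{\hat z}$ is a finite Blaschke product of degree $n_{\hat z;k} \leq d_k$, and on the torus $\left|\frac{\partial \phi}{\partial z_k}(z_k;\hat z)\right| = |\phi_{\hat z}'(z_k)|$; the exceptional $\hat z$ where $\phi$ is singular or the degree drops form a set of measure zero and may be discarded. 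Fubini then gives
$$
\left\Vert \frac{\partial \phi}{\partial z_k} \right\Vert_{H^{\mathfrak{p}}(\mathbb{D}^n)}^{\mathfrak{p}} = \frac{1}{(2\pi)^{n-1}} \int_{\mathbb{T}^{n-1}} \left( \frac{1}{2\pi} \int_{\mathbb{T}} |\phi_{\hat z}'(z_k)|^{\mathfrak{p}}\, |dz_k| \right) |d\hat z|.
$$

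Next I would establish a two-sided one-variable estimate $\frac{1}{2\pi}\int_{\mathbb{T}} |b'|^{\mathfrak{p}}\,|dz| \approx \epsilon(b)^{1-\mathfrak{p}}$ for a Blaschke product $b = \prod_{j=1}^N b_{\alpha_j}$ of degree $N \leq d_k$. The key point is the elementary identity valid on $\mathbb{T}$,
$$
|b'(z)| = \frac{z b'(z)}{b(z)} = \sum_{j=1}^N \frac{1-|\alpha_j|^2}{|z-\alpha_j|^2},
$$
where the middle expression is real and positive, so that $|b'|$ is a sum of nonnegative terms and \emph{no cancellation can occur}. Each single factor satisfies $\frac{1}{2\pi}\int_{\mathbb{T}} \frac{(1-|\alpha_j|^2)^{\mathfrak{p}}}{|z-\alpha_j|^{2\mathfrak{p}}}\,|dz| \approx (1-|\alpha_j|)^{1-\mathfrak{p}}$ (the normalizing integral $\int_{\mathbb{R}}(1+t^2)^{-\mathfrak{p}}\,dt$ converging since $\mathfrak{p} \geq 1$). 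The upper bound then follows from the power-mean inequality $\left(\sum_j a_j\right)^{\mathfrak{p}} \leq N^{\mathfrak{p}-1} \sum_j a_j^{\mathfrak{p}}$ together with $\sum_j (1-|\alpha_j|)^{1-\mathfrak{p}} \leq N\, \epsilon(b)^{1-\mathfrak{p}}$, and the lower bound follows by simply discarding all but the term of the zero $\alpha_{j_0}$ closest to $\mathbb{T}$. Since $\epsilon(\phi_{\hat z}) = \delta(\phi,\hat z)$ and the constants depend only on $\mathfrak{p}$ and $d_k$, combining this with the slicing yields $\left\Vert \frac{\partial\phi}{\partial z_k}\right\Vert_{H^{\mathfrak{p}}}^{\mathfrak{p}} \approx \int_{\mathbb{T}^{n-1}} \delta(\phi,\hat z)^{1-\mathfrak{p}}\, d\hat z$.

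For $\mathfrak{p} > 1$ I would finish with the layer-cake identity $\delta^{1-\mathfrak{p}} = (\mathfrak{p}-1)\int_0^{1/\delta} x^{\mathfrak{p}-2}\,dx$ and Tonelli's theorem, obtaining
$$
\int_{\mathbb{T}^{n-1}} \delta(\phi,\hat z)^{1-\mathfrak{p}}\, d\hat z = (\mathfrak{p}-1) \int_0^\infty \mu(\Omega_x)\, x^{\mathfrak{p}-2}\, dx.
$$
Because $\mu(\Omega_x) \leq \mu(\mathbb{T}^{n-1})$ and $\int_0^1 x^{\mathfrak{p}-2}\,dx < \infty$ for $\mathfrak{p}>1$, the truncated integral $\int_1^\infty$ is finite if and only if the whole integral is, which gives the stated equivalence.

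The main obstacle I expect lies at the two genuinely delicate points. The first is uniformity of the one-variable comparison over the slice degrees $n_{\hat z;k}$, which vary with $\hat z$ but are bounded by $d_k$, combined with a clean treatment of the measure-zero exceptional set; this is mostly bookkeeping once the positivity identity above is in hand. The second, and more serious, is the endpoint $\mathfrak{p}=1$, where the layer-cake identity degenerates into a $0 \cdot \infty$ indeterminacy. Here the left side $\int_{\mathbb{T}^{n-1}} \delta^0\, d\hat z = \mu(\mathbb{T}^{n-1})$ is trivially finite, matching $\frac{\partial\phi}{\partial z_k} \in H^1$ from Lemma \ref{h1}, so what must be verified separately is that $\int_1^\infty \mu(\Omega_x)\, x^{-1}\, dx < \infty$ always holds. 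This requires genuine polynomial decay $\mu(\Omega_x) \lesssim x^{-\gamma}$ for some $\gamma > 0$, which for an atoral $p$ comes from the tameness of the algebraic singular set (a Łojasiewicz-type input of the kind discussed later in the paper) rather than from the slicing argument itself.
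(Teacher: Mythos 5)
The first thing to note is that the paper itself does not prove this lemma: it is imported verbatim as Theorem 2.1 of \cite{SRIF}, so there is no internal proof to compare you against, and the relevant benchmark is the argument in that reference. Measured against that, the core of your proposal for $1 < \mathfrak{p} < \infty$ is correct and is essentially the cited argument: the slicing via Fubini, the positivity identity $|b'(z)| = \sum_{j} (1-|\alpha_j|^2)/|z-\alpha_j|^2$ on $\mathbb{T}$ (which is what makes both the upper and the lower bound work without cancellation issues), the single-factor estimate $\frac{1}{2\pi}\int_{\mathbb{T}} (1-|\alpha|^2)^{\mathfrak{p}}|z-\alpha|^{-2\mathfrak{p}}\,|dz| \approx (1-|\alpha|)^{1-\mathfrak{p}}$ with constants uniform in $\alpha \in \mathbb{D}$, and the layer-cake/Tonelli conversion to $\int_0^\infty \mu(\Omega_x)x^{\mathfrak{p}-2}\,dx$ are all sound, with constants depending only on $\mathfrak{p}$ and $d_k$ as you claim. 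One technical point you elide: equating $\Vert \partial \phi/\partial z_k \Vert_{H^{\mathfrak{p}}}^{\mathfrak{p}}$ with the integral of the $\mathfrak{p}$-th power of the a.e.\ boundary values is not automatic for a rational function with boundary singularities (the Hardy norm is a supremum over dilations); this identification is established in \cite{LMS} and \cite{SRIF} and should be cited or proved rather than absorbed into ``Fubini then gives.''

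The genuine gap is the endpoint $\mathfrak{p}=1$, which the lemma as stated includes and which you flag but do not close. At $\mathfrak{p}=1$ the left-hand side of the equivalence holds for every RIF by Lemma \ref{h1}, so the lemma asserts \emph{unconditionally} that $\int_1^\infty \mu(\Omega_x)x^{-1}\,dx < \infty$; your two-sided comparison degenerates there to $\Vert \partial\phi/\partial z_k \Vert_{H^1} \approx \int_{\mathbb{T}^{n-1}} \delta(\phi,\hat z)^{0}\,d\hat z$, which is vacuously finite on both sides and says nothing about that integral, which could a priori diverge (e.g.\ if $\mu(\Omega_x)$ decayed only like $1/\log x$). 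Closing it requires showing genuine polynomial decay $\mu(\Omega_x) \lesssim x^{-\gamma}$ for some $\gamma > 0$, and this cannot come from the slicing argument: it needs the algebraic structure of $Z(\tilde p) \cap \mathbb{T}^n$, via a Łojasiewicz-type inequality, exactly the kind of input you name. So your diagnosis is accurate, but as written the proposal proves the lemma only for $1 < \mathfrak{p} < \infty$; the case $\mathfrak{p}=1$ (where, note, only the direction ``$H^1 \Rightarrow$ integral finite'' is at issue, the converse being trivial) remains unproved and is a real theorem, not bookkeeping.
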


And this in turn can be used to show that for an RIF, having partial derivatives in $H^p$ implies containment in suitable Dirichlet type spaces.

\begin{lemma} \label{hp}
Let $\phi = \tilde{p}/p$ be an RIF on $\mathbb{D}^n$. Then for $0 < \mathfrak{p} < \infty$, if $\frac{\partial \phi}{\partial z_k} \in H^{\mathfrak{p}}(\mathbb{D}^n)$, then $\phi \in \mathcal{D}_{\mathfrak{p};k}(\mathbb{D}^n)$.
\end{lemma}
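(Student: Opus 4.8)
The plan is to reduce the anisotropic norm $\|\phi\|_{\mathfrak{p};k}$ to a one–dimensional problem on almost every slice, bound each slice by Lemma \ref{onedim}, and then convert the resulting integral into precisely the quantity governed by Lemma \ref{levelset} via a distribution–function computation. First I would observe that the weight in the definition of $\|\cdot\|_{\mathfrak{p};k}$ depends only on the $k$-th index, so that Parseval in the remaining $n-1$ variables gives the slice identity
$$
\|\phi\|_{\mathfrak{p};k}^2 = \frac{1}{(2\pi)^{n-1}} \int_{\mathbb{T}^{n-1}} \| \phi_{\hat z} \|_{D_{\mathfrak{p}}}^2 \, |d\hat z|.
$$
Indeed, fixing $\hat z \in \mathbb{T}^{n-1}$ and expanding $\phi_{\hat z}(z_k)$ in powers of $z_k$, the $j$-th Taylor coefficient is a function of $\hat z$ whose squared $L^2(\mathbb{T}^{n-1})$-norm equals $\sum_{\hat j} |\hat\phi(\hat j, j)|^2$; summing against the weight $(1+j)^{\mathfrak{p}}$ recovers \eqref{pow_norm} with $\alpha_k = \mathfrak{p}$ and all other $\alpha_i = 0$.

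Next, for almost every $\hat z$ the slice $\phi_{\hat z}$ is a finite Blaschke product with $\deg \phi_{\hat z} \le d_k$ whose zeros lie at distance $\delta(\phi,\hat z)$ from $\mathbb{T}$, so $\delta(\phi,\hat z)$ is exactly the quantity $\epsilon(\phi_{\hat z})$ appearing in Lemma \ref{onedim}. Since the degree is uniformly bounded by $d_k$, the implied constant in that lemma may be chosen independent of $\hat z$, and it yields
$$
\|\phi_{\hat z}\|_{D_{\mathfrak{p}}}^2 \lesssim \delta(\phi,\hat z)^{1-\mathfrak{p}},
$$
with constant depending only on $\mathfrak{p}$ and $d_k$. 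Combining with the slice identity gives
$$
\|\phi\|_{\mathfrak{p};k}^2 \lesssim \int_{\mathbb{T}^{n-1}} \delta(\phi,\hat z)^{1-\mathfrak{p}} \, |d\hat z|,
$$
so it remains to show this integral is finite.

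If $0 < \mathfrak{p} \le 1$ then $1-\mathfrak{p} \ge 0$ and, since $\delta(\phi,\hat z) \le 1$, the integrand is bounded and the integral is trivially finite (consistent with the fact that by Lemma \ref{h1} the hypothesis holds automatically in this range). For $\mathfrak{p} > 1$ I would use the layer–cake representation: writing $g = \delta(\phi,\cdot)^{-(\mathfrak{p}-1)}$ and noting that $\{g > t\} = \Omega_x$ under the substitution $t = x^{\mathfrak{p}-1}$, one obtains
$$
\int_{\mathbb{T}^{n-1}} \delta(\phi,\hat z)^{1-\mathfrak{p}} \, |d\hat z| = (\mathfrak{p}-1) \int_0^\infty \mu(\Omega_x)\, x^{\mathfrak{p}-2}\, dx.
$$
The tail $\int_1^\infty \mu(\Omega_x) x^{\mathfrak{p}-2}\,dx$ is finite precisely because $\partial_{z_k}\phi \in H^{\mathfrak{p}}(\mathbb{D}^n)$, by Lemma \ref{levelset}, while $\int_0^1 \mu(\Omega_x) x^{\mathfrak{p}-2}\,dx \le \mu(\mathbb{T}^{n-1}) \int_0^1 x^{\mathfrak{p}-2}\,dx < \infty$ since $\mathfrak{p} > 1$. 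Hence the integral converges and $\phi \in \mathcal{D}_{\mathfrak{p};k}(\mathbb{D}^n)$.

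The routine parts are the Parseval reduction and the change of variables; the step requiring the most care is ensuring that the slices $\phi_{\hat z}$ are genuinely finite Blaschke products of degree at most $d_k$ for almost every $\hat z$, so that Lemma \ref{onedim} applies with a single constant uniform in $\hat z$. This rests on the fact that $\phi$ is singular only for $\hat z$ in a null subset of $\mathbb{T}^{n-1}$ and on the identification of $\delta(\phi,\hat z)$ with the distance of the zeros of $\phi_{\hat z}$ to the torus.
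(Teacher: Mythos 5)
Your proposal is correct and follows essentially the same route as the paper's proof: the Parseval slice identity $\|\phi\|_{\mathfrak{p};k}^2 = \frac{1}{(2\pi)^{n-1}}\int_{\mathbb{T}^{n-1}} \|\phi_{\hat z}\|_{D_\mathfrak{p}}^2\,|d\hat z|$, the pointwise bound from Lemma \ref{onedim} with constant uniform in $\hat z$ since $\deg \phi_{\hat z} \le d_k$, the layer-cake conversion to $\int_0^\infty \mu(\Omega_x)x^{\mathfrak{p}-2}\,dx$, and Lemma \ref{levelset}. The only cosmetic differences are that you dispose of $0 < \mathfrak{p} \le 1$ directly by boundedness of $\delta(\phi,\hat z)^{1-\mathfrak{p}}$ where the paper instead invokes Lemma \ref{h1} to reduce to $\mathfrak{p} \ge 1$, and your distribution-function identity (with the multiplicative factor $\mathfrak{p}-1$) is stated more carefully than the corresponding display in the paper.
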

\begin{proof}
By Lemma \ref{h1} it suffices to prove the statement for $1 \leq \mathfrak{p} < \infty$.

We denote by $(l;k)$ the multi-index $(l_1, \ldots, l_{k-1}, l_{k+1}, \ldots l_n)$. Now for
$$
\phi(z) = \sum_{l \in \mathbb{N}^n} a_l z_1^{l_1} \cdots z_n^{l_n},
$$
define a sequence of functions, by
$$
\phi_{l_k}(\hat{z}) = \sum_{(l;k) \in \mathbb{N}^{n-1}} a_l \hat{z}^{(l;k)}, \quad \hat{z} \in \mathbb{D}^{n-1}.
$$
Here $l \in \mathbb{N}^n$ is understood to be obtained by inserting $l_k$ in position $k$ of $(l;k) \in \mathbb{N}^{n-1}$.
That is
$$
\phi(z) = \sum_{l_k = 0}^\infty \phi_{l_k}(\hat{z}) z_k^{l_k}.
$$
This means that
$$
\Vert \phi_{\hat{z}}(z_k) \Vert_{D_\alpha}^2 = \sum_{l_k=0}^\infty (1+l_k)^\alpha |\phi_{l_k}(\hat{z})|^2 = \sum_{l_k=0}^\infty (1+l_k)^\alpha \left| \sum_{(l;k) \in \mathbb{N}^{n-1}} \hat{z}^{(l;k)} |a_l| \right|^2.
$$

By putting this together we get that
\begin{multline*}
\Vert \phi \Vert_{\mathfrak{p};k}^2 = \sum_{l_k = 0}^\infty (1+l_k)^\mathfrak{p} \left( \sum_{(l;k) \in \mathbb{N}^{n-1}} |a_{(l;k)}|^2 \right) = \sum_{l_k = 0}^\infty (1+l_k)^\mathfrak{p} \Vert \phi_{l_k} \Vert_{H^2(\mathbb{D}^n)}^2
\\
= \frac{1}{(2 \pi)^{n-1}} \int_{\mathbb{T}^{n-1}} \sum_{l_k=0}^\infty (1+l_k)^\mathfrak{p} |\phi_{l_k}(\hat{z})|^2 |d \hat{z}| = \frac{1}{(2 \pi)^{n-1}} \int_{\mathbb{T}^{n-1}} \Vert \phi_{\hat{z}}(z_k) \Vert_{D_\mathfrak{p}}^2 |d \hat{z}|.
\end{multline*}
By applying Lemma \ref{onedim} to the above calculations and integrating over level sets, we get that
\begin{multline*}
\Vert \phi \Vert_{\mathfrak{p};k}^2 \lesssim \frac{1}{(2 \pi)^{n-1}} \int_{\mathbb{T}^{n-1}} \delta(\phi, \hat{z})^{(1-\mathfrak{p})} |d \hat{z}| \\
= \left(\int_0^\infty t^{-\mathfrak{p}} \mu (\{\hat{z} \in \mathbb{T}^{n-1}: \delta(\phi, \hat{z}) < t \}) dt \right)^{1-\mathfrak{p}} \\
= \left(\int_0^\infty x^{\mathfrak{p}-2} \mu (\{\hat{z} \in \mathbb{T}^{n-1}: \delta(\phi, \hat{z}) < 1/x \}) dx \right)^{1-\mathfrak{p}} \\
= \left( \int_0^\infty x^{\mathfrak{p}-2} \mu(\Omega_x) dx \right)^{1-\mathfrak{p}}.
\end{multline*}
Since $\mu(\Omega_x)$ is always bounded, the last integral is finite if and only if 
$$
\int_1^\infty x^{\mathfrak{p}-2} \mu(\Omega_x) dx  < \infty,
$$
and by Lemma \ref{levelset} this is true if (and only if) $\frac{\partial \phi}{\partial z_k} \in H^p(\mathbb{D}^n)$.
Put together, this shows that $\frac{\partial \phi}{\partial z_k} \in H^\mathfrak{p}(\mathbb{D}^n)$ implies that
$$
\phi \in \mathcal{D}_{(\mathfrak{p};k)}(\mathbb{D}^n),
$$
which finishes the proof.
\end{proof}

\begin{thm} \label{hp_embe}
Suppose $\frac{\partial \phi}{\partial z_k} \in H^{\alpha_k}(\mathbb{D}^n)$, where $0 < \alpha_k \leq 2$ for $k = 1, \ldots n$. Then for every choice of $(c_1, \ldots, c_n)$ with $c_j > 0$ satisfying $1 = \sum_{i=1}^n c_i^{-1}$, we have that $\phi \in \mathcal{D}_{\frac{\alpha_1}{c_1}, \ldots, \frac{\alpha_n}{c_n}}(\mathbb{D}^n)$.

In particular
$$
\phi \in \mathcal{D}_{\frac{\alpha_1}{n}, \ldots, \frac{\alpha_n}{n}}(\mathbb{D}^n)
$$
\end{thm}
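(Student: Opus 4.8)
The plan is to combine the two ingredients that the preceding lemmas have already prepared: Lemma \ref{hp}, which turns $H^{\mathfrak{p}}$ control of a single partial derivative into membership in a Dirichlet-type space weighted in that one variable, and Lemma \ref{embe}, which reconstructs an anisotropic Dirichlet norm out of such single-variable norms via iterated Hölder estimates.

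First I would apply Lemma \ref{hp} one coordinate at a time. For each $k$, the hypothesis $\partial \phi / \partial z_k \in H^{\alpha_k}(\mathbb{D}^n)$ gives $\phi \in \mathcal{D}_{(\alpha_k;k)}(\mathbb{D}^n)$, that is, $\|\phi\|_{\alpha_k;k} < \infty$. This yields $n$ finite single-variable Dirichlet norms of $\phi$. Next, fix $(c_1,\ldots,c_n)$ with $c_j > 0$ and $\sum_{i=1}^n c_i^{-1} = 1$, and set $\beta_i = \alpha_i/c_i$. Applying the general form of Lemma \ref{embe} (as justified in the Remark following it) to the anisotropic norm $\|\phi\|_{\beta_1,\ldots,\beta_n}$ produces
$$
\|\phi\|_{\beta_1,\ldots,\beta_n}^2 \leq \prod_{i=1}^n \|\phi\|_{c_i\beta_i;i}^{1/c_i} = \prod_{i=1}^n \|\phi\|_{\alpha_i;i}^{1/c_i}.
$$
By the first step every factor on the right is finite, hence so is the product, and therefore $\phi \in \mathcal{D}_{\beta_1,\ldots,\beta_n} = \mathcal{D}_{\alpha_1/c_1,\ldots,\alpha_n/c_n}(\mathbb{D}^n)$. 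The displayed ``in particular'' statement is the special case $c_1 = \cdots = c_n = n$, which satisfies $\sum_{i=1}^n c_i^{-1} = 1$ and gives $\beta_i = \alpha_i/n$; alternatively one may invoke the Corollary to Lemma \ref{embe} directly. Notice that the constraint $\alpha_k \leq 2$ never actually enters this argument; it merely records the range of exponents in which the $H^{\mathfrak{p}}$ hypothesis is the relevant one for RIFs, whose partials always lie in $H^1$ but need not lie in $H^2$.

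The one point that requires genuine care — and the only real obstacle — is that Lemma \ref{embe} is stated under the standing assumption $f \in \mathcal{D}_{\alpha_1,\ldots,\alpha_n}$, which is exactly the conclusion we are trying to establish; read literally, the step above would be circular. The resolution is that the inequality in Lemma \ref{embe} arises from successive applications of Hölder's inequality to partial sums of the coefficient series, and these partial-sum estimates are valid regardless of convergence. Hence the inequality should be understood in the extended sense that whenever its right-hand side is finite, the left-hand side is finite and the bound holds. Under this reading the finiteness of the single-variable norms $\|\phi\|_{\alpha_i;i}$ forces $\phi$ into the target space, so no circularity arises and the proof is complete.
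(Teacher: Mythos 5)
Your proof is correct and follows essentially the same route as the paper: Lemma \ref{hp} makes each single-variable norm $\| \phi \|_{\alpha_i; i}$ finite, and Lemma \ref{embe} applied with parameters $\alpha_i/c_i$ combines them into the anisotropic norm. Your point about the apparent circularity in invoking Lemma \ref{embe} — that its Hölder-based proof gives the inequality in the extended sense, valid in $[0,\infty]$, so finiteness of the right-hand side forces finiteness of the left — is a genuine subtlety that the paper's own proof passes over silently, and your resolution of it is the correct one.
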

\begin{proof}
By Lemma \ref{embe}
$$
\| \phi \|_{\alpha_1/c_1, \ldots, \alpha_n/c_n}^2 \leq \prod_{i=1}^n \| \phi \|_{\alpha_i; i}^{1/c_i},
$$
and by Lemma \ref{hp} each factor on the right hand side is finite, which gives us the desired inclusion.
\end{proof}

\begin{corollary} \label{allRIFs}
Every RIF in $\mathbb{D}^n$ belongs to $\mathcal{D}_{1/n}(\mathbb{D}^n)$
\end{corollary}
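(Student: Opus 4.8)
The plan is to derive Corollary \ref{allRIFs} directly from Theorem \ref{hp_embe} together with the $H^1$-integrability of partial derivatives established in Lemma \ref{h1}. The key observation is that the corollary is essentially the ``worst-case'' instance of the theorem: if every partial derivative lies in $H^1$, and we distribute the Dirichlet-type regularity symmetrically across the $n$ variables, we land exactly in the isotropic space $\mathcal{D}_{1/n}$.

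First I would invoke Lemma \ref{h1}, which asserts that for any RIF $\phi = \tilde p / p$ on $\mathbb{D}^n$ one has $\|\partial \phi / \partial z_k\|_{H^1(\mathbb{D}^n)} = m_k < \infty$ for each $k = 1, \ldots, n$. In particular $\partial \phi / \partial z_k \in H^1(\mathbb{D}^n)$ for every $k$. This lets me apply Theorem \ref{hp_embe} with the choice $\alpha_k = 1$ for all $k$; note that $0 < \alpha_k = 1 \leq 2$, so the hypothesis of the theorem is satisfied.

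Next I would make the symmetric choice of exponents $c_1 = \cdots = c_n = n$. These satisfy the constraint $\sum_{i=1}^n c_i^{-1} = \sum_{i=1}^n 1/n = 1$ and $c_i > 0$, so they lie in the admissible set for Theorem \ref{hp_embe} (equivalently, one may simply cite the ``in particular'' conclusion of that theorem). Substituting $\alpha_k = 1$ and $c_k = n$ gives $\alpha_k / c_k = 1/n$ for each $k$, and the theorem yields
$$
\phi \in \mathcal{D}_{1/n, \ldots, 1/n}(\mathbb{D}^n) = \mathcal{D}_{1/n}(\mathbb{D}^n),
$$
where the last equality is just the notational convention for isotropic Dirichlet type spaces.

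Finally, to obtain the statement for a \emph{general} RIF rather than one of the restricted form $\tilde p / p$, I would appeal to the reduction recorded in the preliminaries: an arbitrary RIF has the form $c\, z_1^{k_1} \cdots z_n^{k_n}\, \tilde p / p$, and multiplication by a unimodular constant and a monomial does not affect membership in $\mathcal{D}_{1/n}$ (monomials are polynomials, which lie in every Dirichlet type space, and the relevant estimates transfer). \textbf{The main point to watch} is really only this last reduction step; the core argument is a one-line specialization of Theorem \ref{hp_embe} and presents no genuine obstacle, since all the analytic work has already been carried out in Lemmas \ref{onedim}, \ref{h1}, \ref{levelset}, and \ref{hp}.
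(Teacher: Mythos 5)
Your proof is correct and follows exactly the paper's argument: apply Lemma \ref{h1} to get $H^1$-integrability of all partial derivatives, then invoke Theorem \ref{hp_embe} with $\alpha_k = 1$ and the symmetric choice $c_k = n$ (the ``in particular'' case). Your closing remark about monomial factors is a sound addition, matching the reduction the paper records in its preliminaries.
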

\begin{proof}
By Lemma \ref{h1} every partial derivative of a RIF lies in $H^1(\mathbb{D}^n)$, so applying Theorem \ref{hp_embe} with $\alpha_k = 1$ for all $k$ finishes the proof.   
\end{proof}

\section{Examples and additional inclusions}

\subsection{$H^p$ and $\mathcal{D}_{\overline{\alpha}}$ comparisons}

\bigskip

Not surprisingly, Corollary \ref{allRIFs} is not sharp in general; there are RIF:s that belong to Dirichlet type spaces $\mathcal{D}_\alpha(\mathbb{D}^n)$ with $\alpha> 1/n$. In this section we consider several examples of RIF:s and use our previous results to determine when they lie in certain Dirichlet type spaces. Furthermore, in Theorem \ref{LMS11.3}, we provide a convenient method of determining slightly better regularity than that obtained from Corollary \ref{allRIFs} by providing a lower bound on the regularity of the RIF corresponding to $p$ if we know that $1/p$ lies in a certain Dirichlet type spaces. In some cases, the Dirichlet type norm of $1/p$ can be calculated in a straight-forward way using its power series representation. 

\begin{example} \label{fav_exmpl}
\normalfont
Consider the RIF on $\mathbb{D}^n$
$$
\phi_n(z) := \frac{n \prod_{i=1}^n z_i - \sum_{j = 1}^n \prod_{k \neq j} z_k}{n - \sum_{l=1}^n z_l}.
$$
In Example $2.5$ of \cite{SRIF}, it was shown that all partial derivatives of $\phi$ lie in $H^{p}(\mathbb{D}^n)$ for all $p < (n+1)/2$, and so, by Theorem \ref{hp_embe}, $\phi_n \in \mathcal{D}_{\alpha}$ for every such $\alpha < \frac{n+1}{2n} = \frac{1}{2} + \frac{1}{2n}$.

Although this estimate is not necessarily sharp, we can show that the above RIFs do not lie in the unweighted Dirichlet space $\mathcal{D}_1$ by considering only the diagonal elements in the power series norm.

First of all

\begin{equation} \label{theRIF}
\phi_n(z_1, ... ,z_n) = \left(n \sum_{i=1}^n z_i - \sum_{i=1}^n \prod_{j \neq i} z_j \right) \sum_{m=0}^\infty \frac{\left(\sum_{k=1}^n z_k \right)^m}{n^m}.
\end{equation}
And we have that
\begin{align*}
&\sum_{m=0}^\infty \frac{\left(\sum_{k=1}^n z_k \right)^m}{n^m} \\
= &\sum_{j_1 = 0}^\infty \sum_{j_2 = 0}^{j_1} \cdots \sum_{j_n = 0}^{j_{n-1}} \frac{1}{n^{j_1 - j_n}} {j_1 \choose j_2} {j_2 \choose j_3} \cdots {j_{n-1} \choose j_n} z_1^{j_1 - j_2} \cdots z_{n-1}^{j_{n-1} - j_n} z_n^{j_n},
\end{align*}
where the product of the binomial coefficients can be rewritten as 
$$
\frac{j_1 !}{(j_1 - j_2)! (j_2 - j_3)! \cdots (j_{n-1} - j_n)! j_n!}.
$$
By making the change of variables
$$
k_n = j_n
$$
and
$$
k_{n-1} = j_{n-1} - j_n, \quad k_{n-2} = j_{n-2} - j_{n-1}, \quad \ldots \quad, \quad k_1 = j_1 - j_2,
$$
the above series becomes
\begin{equation} \label{1/p}
\sum_{k_1 = 0}^\infty \sum_{k_2 = 0}^{\infty} \cdots \sum_{k_n = 0}^{\infty} \frac{1}{n^{\sum_{i=1}^n k_1}} \frac{\left( \sum_{i=1}^n k_i \right) !}{\prod_{i=1}^n (k_i)!} z_1^{k_1} \cdots z_{n-1}^{k_{n-1}} z_n^{k_n}.
\end{equation}
By plugging this into \eqref{theRIF}, we can find the Fourier coefficients of $\phi(z_1, .. , z_n)$. In particular we can find the coefficients on the diagonal. If we denote by $a_{k_1, k_2, .. , k_n}$ the Fourier coeffients of $\phi$, then we see that
\begin{align*}
a_{k_1+1, k_2+1, .. , k_n+1} = \frac{n \left( \sum_{i=1}^n k_i \right)!}{\prod_{i=1}^n (k_i!) n^{\sum_{i=1}^n k_i}} - \sum_{j=1}^n \frac{\left(1 + \sum_{i=1}^n k_i \right)!}{\prod_{i=1}^n (k_i!) (1+k_j) n^{1 + \sum_{i=1}^n k_i}} \\
= \frac{\left( \sum_{i=1}^n k_i \right)!}{\prod_{i=1}^n (k_i!) n^{\sum_{i=1}^n k_i}} \left( n -  \frac{1 + \sum_{i=1}^n k_i}{n} \sum_{j=1}^n \frac{1}{k_j + 1}\right),
\end{align*}
and for $l = k_1 = k_2 = \cdots = k_n$ we get
\begin{align*}
&a_{l+1, l+1, ... ,l+1} \\
= &\frac{(nl)!}{(l!)^n n^{nl}} \left( n - \frac{1+ nl}{n} \frac{n}{l+1} \right) = \frac{(nl)!}{(l!)^n n^{nl}} \frac{n-1}{l-1}.
\end{align*}
By Stirling's approximation, $(nl)!/((l!)^n n^{nl})$ is assymptotically comparable to
$$
c \frac{(nl)^{nl} \sqrt{nl}}{(l^l)^n n^{nl} \sqrt{l}^n} = c_2(n) l^{(1-n)/2}. 
$$
By just comparing with the diagonal elements, we get that
\begin{multline*}
\Vert \phi \Vert_\alpha^2 \geq \sum_{m=0}^\infty (2+m)^{n \alpha} a_{m+1,...,m+1}^2 \\
\geq k \sum_{m=2}^\infty (2+m)^{n \alpha} m^{1-n} (m-1)^{-2} \geq k \sum_{m=2}^\infty (m-1)^{n \alpha - 1 - n},
\end{multline*}
and the last series diverges if $n \alpha - 1 - n \geq -1 \iff n(\alpha -1) \geq 0 \iff \alpha \geq 1$.
\end{example}

The following example illustrates how Theorem \ref{hp_embe} can be applied to shift weight between the parameters $\alpha_1, \alpha_2$ and $\alpha_3$. More specifically, we show how a lot of regularity in one variable can be used to substantially raise the regularity in the other parameters at the expense of that one.   

\begin{example}
\normalfont
Consider the RIF $\phi = \tilde{p}/p$ on $\mathbb{D}^3$ obtained from
$$
p(z) = (2-z_1-z_2) + \frac{z_3}{2}(2 z_1 z_2 - z_1 - z_2).
$$
In Example $3.2$ of \cite{SRIF} it was shown that $\partial_{z_3} \phi$ is bounded, and $\partial_{z_1} \phi, \partial_{z_2} \phi \in H^{\alpha}(\mathbb{D}^3)$ for $\alpha < 3/2$.
By choosing
$$
c_1 = 2 + \epsilon/2, \quad c_2 = 2+ \epsilon/2,
$$
and 
$$
c_3 = 1 - c_1^{-1} - c_2^{-1},$$
and noting that $\partial_{z_3} \phi \in H^{c_3 \alpha_3}(\mathbb{D}^3)$ for all $\alpha_3 < \infty$, Theorem \ref{hp_embe} gives us that $\phi \in D_{\frac{3}{4+\epsilon}, \frac{3}{4+\epsilon} , \alpha_3}(\mathbb{D}^3)$. By choosing sufficiently small $\epsilon$, we see that $\phi \in D_{\alpha_1, \alpha_2, \alpha_3}(\mathbb{D}^3)$ for every choice of $\alpha_1, \alpha_2 < 3/4$ and every $\alpha_3 < \infty$.

If we instead choose
$$
c_1 = 3/2, \quad c_2 = 3 + 3 \epsilon/ 2$$
and
$$
c_3 = 1 - c_1^{-1} - c_2^{-1},
$$
then Theorem \ref{hp_embe} shows that $\phi \in D_{1, \frac{1}{2+\epsilon}, \alpha_3}$ where $\alpha_3 < \infty$. By choosing $\epsilon$ sufficiently small we see that $\phi \in D_{1, \alpha_2, \alpha_3}$ for every $\alpha_2 < 1/2$ and $\alpha_3 < \infty$.

Note that even though one partial derivative is bounded, the mixed partial derivative $\partial_{z_1} \partial_{z_2} \partial_{z_3} \phi$ is unbounded. In fact, direct calculation shows that
$$
\partial_{z_1} \partial_{z_2} \partial_{z_3} \phi(t,t,t) = \frac{t^4 - 5 t^3 + 8 t^2 - 4t + 4}{8 (t-1)^2 (t-2)^4},
$$
which has a singularity of degree $2$ as $t \ra 1$. 

\end{example}

One type of RIF:s that is fairly easy to examine are lifts from RIF:s on $\mathbb{D}^2$. The following examples illustrate how one can use that lifts will inherit the $H^p$ integrability of their partial derivatives from the lifted RIF together with Theorem \ref{hp_embe} in order to obtain inclusions in different Dirichlet type spaces where one shifts weight between the parameters $\alpha_1, \alpha_2$ and $\alpha_3$ in ways that are not immediately obvious from the power series norm. 

\begin{example}
\normalfont
Let $\phi(z_1,z_2)$ be an RIF which lies in $\mathcal{D}_{\alpha, \alpha}$ if and only if $\alpha < t/2$ and whose partial derivatives lie in $H^\mathfrak{p}(\mathbb{D}^2)$ for all $\mathfrak{p} < t$. Note that, from \cite{LMS} we know that such a $t$ must be of the form $t= 1 + 1/(2n)$ for some natural number $n$.

Then $f(x,y,z) := \phi(xy,z)$ is a RIF on $\mathbb{D}^3$. Furthermore, if $\phi(z_1, z_2) = \sum_{k,l} a_{k,l,} z_1^k z_2^l$, then $f(x,y,z) = \sum_{k,l} a_{k,l} (xy)^k z^l$, so $\hat{f}(m,n,s) = a_{k,l}$ if $m=n=k$ and $s=l$, and $0$ otherwise. It follows that
$$
\Vert f \Vert_{\alpha_1, \alpha_2, \alpha_3}^2 = \sum_{k,l} (1+k)^{\alpha_1 + \alpha_2} (1+l)^{\alpha_3} |a_{k,l}|^2.
$$
If $\alpha_3 = \alpha_1 + \alpha_2$, then we know that this series converges if and only if $\alpha_3 < t/2$, but we have freedom to choose $\alpha_1 \neq \alpha_2$. However, given the knowledge we have, we cannot, in any obvious way, raise $\alpha_3$ above $t/2$ by lowering $\alpha_1 + \alpha_2$. This however, can be done by considering $H^t-$integrability of the partial derivatives. We have that $\partial_z f = \partial_2 \phi(xy,z)$, and so
\begin{multline*}
\Vert \partial_z f \Vert_{H^\alpha}^\alpha = \int_{\mathbb{T}^3} | \partial_2 \phi(xy,z) |^\alpha |dx||dy||dz| \\
= \int_\mathbb{T} \left( \int_{\mathbb{T}^2} |\partial_2 \phi(xy,z))|^\alpha |dy||dz| \right) |dx|
\int_{\mathbb{T}} \Vert \partial_2 \phi \Vert_{H^\alpha}^\alpha |dx| = 2 \pi  \Vert \partial_2 \phi \Vert_{H^\alpha}^\alpha,
\end{multline*}
which is finite if and only if $\alpha < t$.

Similarly $\partial_x f = y \partial_1 \phi$ and $\partial_y f = x \partial_1 \phi$ will both lie in $H^\alpha$ if and only if $\alpha < t$ (the factors $x$ and $y$ have absolute value $1$ everywhere in the domain of integration). For example, by applying Theorem \ref{hp_embe} with
$$
c_3 = 2 \text{ and } 1/c_1 + 1/c_2 = 1/2, \quad c_1,c_2 > 0,
$$
we get that for every $p_0 < t$ and $\alpha_3 = p_0/c_3 = p_0/2$, and $\alpha_1 = p_0/c_1, \alpha_2 = p_0/c_2$. Here $\alpha_1 + \alpha_2 = \alpha_3 = p_0/2$, and furthermore we see immediately that $f \in D_{p_0/3,p_0/3,p_0/3}$ for every $p_0 < t$, which was not obvious from the power series norm.

A slightly more difficult example would be $f(x,y,z) = \phi(xz,yz)$. If $\phi(z_1,z_2) = \sum_{k,l} a_{k,l} z_1^k z_2^l,$ then $f(x,y,z) = \sum_{k,l} a_{k,l} x^k y^l z^{k+l}$, and so $\hat{f}(k,l,m) = a_{k,l}$ if $m = k+l$ and $0$ otherwise. For $\alpha_3 \leq 1$ it follows that
\begin{multline*}
\Vert f \Vert_{\alpha_1, \alpha_2, \alpha_3}^2 = \sum_{k,l} (1+k)^{\alpha_1} (1+l)^{\alpha_2} (1+k+l)^{\alpha_3}|a_{k,l}|^2 \\
\leq \sum_{k,l} (1+k)^{\alpha_1 + \alpha_3} (1+l)^{\alpha_2 + \alpha_3} |a_{k,l}|^2,
\end{multline*}
and the last series converges if $\alpha_1 + \alpha_3 < t/2$ and $\alpha_2 + \alpha_3 < t/2$. For $\alpha_1 = \alpha_2 = \alpha_3,$ we see that every possible value must satisfy $\alpha_i < t/4$. 
Note however that this approximation is not sharp. For example, along diagonal elements the original series has the weight $(1+k)^{\alpha_1 + \alpha_2} (1+2k)^{\alpha_3}$, whereas the last series has the weight $(1+k)^{\alpha_1 + \alpha_2 + 2 \alpha_3}$. In fact, we can obtain better estimates by applying Theorem \ref{hp_embe}. Similarly to the previous example, $\partial_x f = z \partial_1 \phi$ and $\partial_y f = z \partial_2 \phi$ will lie in $H^\alpha$ for $\alpha < t$. And for $\partial_z f= x \partial_1 \phi + y \partial_2 \phi$ we have
$$
\Vert \partial_z f \Vert_{H^\alpha} = \Vert x \partial_1 \phi + y \partial_2 \phi \Vert_{H^\alpha} \leq \Vert x \partial_1 \phi \Vert_{H^\alpha} + \Vert y \partial_2 \phi \Vert_{H^\alpha}, 
$$
which is finite for $\alpha < t$.
By applying Theorem \ref{hp_embe} we see that $f \in \mathcal{D}_{\alpha}(\mathbb{D}^3)$ for every $\alpha < t/3$.

We can apply this to the RIF:s from Example \ref{fav_exmpl}. The partial derivatives of the RIF
$$
\phi(z_1, z_2) = \frac{2 z_1 z_2 - z_1 - z_2}{2 - z_1 - z_2}
$$
lie in $H^t(\mathbb{D}^2)$ if and only if $t < 3/2$, and it was shown in Example $3$ of \cite{LMS} the $\phi \in \mathcal{D}_{\alpha}(\mathbb{D}^2)$ if and only if $\alpha < 3/4$. It follows that the RIF
$$
f_1(x,y,z) := \phi(xy,z) = \frac{2 x y z - xy - z}{2 - xy - z} 
$$
lies in $\mathcal{D}_{\alpha}$ for $\alpha < 1/2$ and in $\mathcal{D}_{\frac{\alpha}{2}, \frac{\alpha}{2}, \alpha},$ $\alpha < 3/4$. The last inclusion, also obtained from the $H^t$ estimates, is in fact sharp (as can be verified from the power series norm).
Furthermore
$$
f_2(x,y,z) := \phi(xz,yz) = \frac{2 x y z^2 - xz - yz}{2 - xz - yz}
$$
lies in $\mathcal{D}_{\alpha}(\mathbb{D}^3)$ for $\alpha < t/3 = 1/2$. 

\end{example}

\subsection{Comparisons for $\mathcal{D}_{\overline{\alpha}}$ norms}

The following lemma gives a more general way of comparing norms of different Dirichlet type spaces.

\begin{lemma}
Let $\alpha= (\alpha_1, \ldots, \alpha_n)$, $V = (v_1, \ldots, v_n)$ and $U = (u_1, \ldots, u_n)$ be multi-indices and let $p,q>1$ satisfy $p^{-1} + q^{-1} = 1$. Then
$$
\Vert f \Vert_{\alpha + V}^2 \leq \Vert f \Vert_{\alpha - U + pV}^{1/p} \Vert f \Vert_{\alpha + (q-1)U}^{1/q} 
$$ 
\end{lemma}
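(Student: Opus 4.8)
The plan is to obtain the bound from a single application of Hölder's inequality to the power-series expression \eqref{pow_norm} for the norm, exactly as in the first step of the proof of Lemma \ref{embe}, the only new feature being that the shifts are now arbitrary multi-indices. Write $a_k := \hat f(k_1,\dots,k_n)$ and, for a multi-index $\beta \in \mathbb{R}^n$, abbreviate the product weight $w_k^\beta := \prod_{j=1}^n (1+k_j)^{\beta_j}$, so that $\|f\|_\beta^2 = \sum_{k \in \mathbb{N}^n} w_k^\beta |a_k|^2$. The quantity to be estimated is then $\|f\|_{\alpha+V}^2 = \sum_k w_k^{\alpha+V}|a_k|^2$.

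First I would factor each summand so that Hölder with exponents $p,q$ reproduces precisely the two target subscripts. Using $|a_k|^2 = |a_k|^{2/p}|a_k|^{2/q}$ and splitting the weight as $w_k^{\alpha+V} = w_k^{s_1}w_k^{s_2}$, set $X_k := w_k^{s_1}|a_k|^{2/p}$ and $Y_k := w_k^{s_2}|a_k|^{2/q}$ and apply $\sum_k X_k Y_k \le (\sum_k X_k^p)^{1/p}(\sum_k Y_k^q)^{1/q}$. Since $X_k^p = w_k^{p s_1}|a_k|^2$ and $Y_k^q = w_k^{q s_2}|a_k|^2$, the two Hölder factors are exactly $\|f\|_{p s_1}^2$ and $\|f\|_{q s_2}^2$.

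The single genuine choice is the split: I take $s_1 := \tfrac1p(\alpha - U) + V$ and $s_2 := \tfrac1q\bigl(\alpha + (q-1)U\bigr)$, which give $p s_1 = \alpha - U + pV$ and $q s_2 = \alpha + (q-1)U$. Before invoking Hölder one must check that this is an admissible split, i.e. that $s_1 + s_2 = \alpha + V$; this is where the hypothesis enters, since $\tfrac1p + \tfrac1q = 1$ together with the identity $\tfrac{q-1}{q} = \tfrac1p$ force the $U$-contributions to cancel. Assembling the factors yields $\|f\|_{\alpha+V}^2 \le \bigl(\|f\|_{\alpha - U + pV}^2\bigr)^{1/p}\bigl(\|f\|_{\alpha + (q-1)U}^2\bigr)^{1/q}$, which is the asserted inequality under the same exponent convention used in Lemma \ref{embe} and its Corollary.

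There is no real obstacle beyond this exponent bookkeeping. The one point requiring care is verifying the admissibility identity $s_1 + s_2 = \alpha + V$ before applying Hölder, since an arbitrary split of the weight would not land on the prescribed subscripts; the cancellation of the $U$-terms is the sole place where the relation $p^{-1} + q^{-1} = 1$ is used. Taking $U = 0$ and specializing $V$ recovers the one-step estimate already employed in Lemma \ref{embe}, which serves as a consistency check.
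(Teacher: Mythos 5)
Your proof is correct and is essentially the paper's own argument: a single application of H\"older's inequality to the power-series norm, with the weight split chosen so that the two H\"older factors carry the subscripts $\alpha - U + pV$ and $\alpha + (q-1)U$ (the paper phrases this with a shifted parameter $\alpha' = \alpha - U$ and substitutes at the end, which is the same computation as your admissibility check $s_1 + s_2 = \alpha + V$). You also correctly flag the exponent convention $\Vert f \Vert^{1/p} = (\Vert f \Vert^2)^{1/p}$, which the paper uses implicitly here and in Lemma \ref{embe}.
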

\begin{proof}
Let $f(z) = \sum_{k \in \mathbb{N}^n} a_k z^k$. Then by Hölder's inequality
\begin{multline*}
\Vert f \Vert_{\alpha' + V + U}^2 = \sum_k (1+k_1)^{\alpha'_1 + v_1 + u_1} \cdots (1+k_n)^{\alpha'_n + v_n + u_n} |a_k|^2 \\
= \sum_k ((1+k_1)^{\frac{\alpha'_1}{p} + v_1} \cdots (1+k_n)^{\frac{\alpha'_n}{p} + v_n} |a_k|^{\frac{2}{p}} ) ((1+k_1)^{\frac{\alpha'_1}{q} + u_1} \cdots (1+k_n)^{\frac{\alpha'_n}{q} + u_n} |a_k|^{\frac{2}{q}} ) \\
\leq \Vert f \Vert_{\alpha' + pV}^{1/p} \Vert f \Vert_{\alpha' + qU}^{1/q}.
\end{multline*}
Setting $\alpha = \alpha' - U$ finishes the proof.
\end{proof}

By applying this lemma inductively, that is, with $V = (q-1)U$, we obtain the following corollary.

\begin{corollary} \label{monsterfac}
Let $\alpha, V, U_1, U_2, \ldots U_{n-1}$ be multi-indices, and let $1< p_1, \ldots , p_{n-1}$, and $1< q_1, \ldots, q_n$ be the corresponding Hölder conjugates. Then
$$
\Vert f \Vert_{\alpha + V} \leq \Vert f \Vert_{\alpha + V'_1}^{1/(2p_1)} \Vert f \Vert_{\alpha + V'_2}^{1/(2^2 p_2 q_1)} \cdots \Vert f \Vert_{\alpha + V'_{n-1}}^{1/(2^{n-1} p_{n-1} q_{n-2} \cdots q_1)} \Vert f \Vert_{\alpha + V'}^{1/(2^{n-1} q_{n-1} \cdots q_1)},
$$
where
$$
V'_1 = -U_1 + p_1 V,\quad V'_{k+1} = -U_{k+1} + p_{k+1}(q_k - 1)V'_k
$$
and
$$
V' = (q_{n-1}-1)U_{n-1}.
$$
\end{corollary}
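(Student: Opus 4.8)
The plan is to prove the corollary by induction on $n$, using the preceding lemma as the only real tool and the substitution $V \mapsto (q-1)U$ flagged in the hint to remove one index at each stage. The base case $n=2$ is essentially the lemma itself: applying it with $p=p_1$, $q=q_1$, $U=U_1$ and the given $V$, and then taking square roots, gives
\[
\Vert f \Vert_{\alpha + V} \leq \Vert f \Vert_{\alpha - U_1 + p_1 V}^{1/(2p_1)} \Vert f \Vert_{\alpha + (q_1-1)U_1}^{1/(2q_1)},
\]
which is exactly the asserted bound once one reads $V'_1 = -U_1 + p_1 V$ and $V' = (q_1-1)U_1$ and notes that for $n=2$ the empty tail of $q$'s makes the terminal exponent $1/(2^{n-1}q_{n-1}\cdots q_1)$ collapse to $1/(2q_1)$.

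For the inductive step I would assume the statement for $n-1$ indices and establish it for $n$. First apply the lemma a single time to $\Vert f \Vert_{\alpha + V}$ with the Hölder pair $(p_1,q_1)$ and index $U_1$, and take square roots, producing
\[
\Vert f \Vert_{\alpha + V} \leq \Vert f \Vert_{\alpha + V'_1}^{1/(2p_1)} \Vert f \Vert_{\alpha + \widetilde{V}}^{1/(2q_1)}, \qquad V'_1 = -U_1 + p_1 V, \quad \widetilde{V} = (q_1-1)U_1.
\]
The first factor is already in final form and supplies the $V'_1$ term with its exponent $1/(2p_1)$. The second factor is once more a Dirichlet-type norm of $f$ at a single shifted multi-index $\widetilde{V}$, so I would apply the inductive hypothesis to it, feeding in $\widetilde{V}$ in place of $V$ together with the truncated data $U_2,\ldots,U_{n-1}$ and Hölder pairs $(p_2,q_2),\ldots,(p_{n-1},q_{n-1})$. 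This rewrites $\Vert f \Vert_{\alpha + \widetilde{V}}$ as the remaining product of $n-1$ norms, whose offsets are generated by the displayed recursion seeded by $\widetilde{V}$ and whose terminal offset is $V' = (q_{n-1}-1)U_{n-1}$.

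It then remains to combine the two inequalities and to check the exponents. Raising the inductive bound for $\Vert f \Vert_{\alpha + \widetilde{V}}$ to the power $1/(2q_1)$ multiplies every exponent coming from the $(n-1)$-index bound by $1/(2q_1)$; a short computation shows this converts each denominator $2^{j}p_{j+1}q_{j}\cdots q_2$ into $2^{j+1}p_{j+1}q_{j}\cdots q_1$ and the terminal $2^{n-2}q_{n-1}\cdots q_2$ into $2^{n-1}q_{n-1}\cdots q_1$, which after reindexing $k=j+1$ are precisely the exponents $1/(2^{k}p_k q_{k-1}\cdots q_1)$ and $1/(2^{n-1}q_{n-1}\cdots q_1)$ in the statement. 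The main obstacle I anticipate is purely this bookkeeping: one must verify that the offsets produced by the inner induction line up with the claimed recursion for the $V'_k$, and that the accumulated powers of $2$ and of the $q_j$ telescope exactly as above. No new analytic input is needed—Hölder's inequality is already packaged inside the lemma—so the entire difficulty is in organizing the induction so that the exponents and the offset recursion fall out cleanly.
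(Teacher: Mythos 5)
Your strategy---iterate the lemma, always re-applying it to the second factor with the substitution $V \mapsto (q-1)U$---is exactly the paper's proof (the paper disposes of this corollary in a single sentence saying precisely that), and your exponent bookkeeping is correct: raising the inductive bound to the power $1/(2q_1)$ does convert the denominators $2^{j}p_{j+1}q_{j}\cdots q_2$ into $2^{j+1}p_{j+1}q_{j}\cdots q_1$, and the terminal denominator $2^{n-2}q_{n-1}\cdots q_2$ into $2^{n-1}q_{n-1}\cdots q_1$.

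However, the one step you deferred as ``purely this bookkeeping'' is exactly where the argument, as you stated it, fails. Each new application of the lemma is seeded with the offset of the previous second factor, namely $(q_k-1)U_k$, so the offsets your induction actually produces satisfy
\[
V'_1 = -U_1 + p_1 V, \qquad V'_{k+1} = -U_{k+1} + p_{k+1}(q_k - 1)\,U_k ,
\]
with $U_k$ on the right-hand side, not $V'_k$. The recursion printed in the corollary, $V'_{k+1} = -U_{k+1} + p_{k+1}(q_k-1)V'_k$, is a different and generally inequivalent recursion: already for $k=1$ it would force the seed fed into the second application to be $(q_1-1)(-U_1+p_1V)$ rather than the $(q_1-1)U_1$ that the lemma actually hands you, and these agree only when $U_1 = p_1V/2$. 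So your claim that the inner induction's offsets ``line up with the claimed recursion for the $V'_k$'' cannot be verified as written. The resolution is that the printed recursion is a typo in the paper: the paper's own inversion formula a few lines later, $U_{k+1} = -V'_{k+1} + \frac{p_{k+1}}{p_k-1}U_k$, and the subsequent computation with $V'_k = -2\bar{1}_{-k}$, are consistent only with the $U_k$-version. Your induction is sound and proves that corrected statement; a complete write-up should prove (and state) the recursion with $U_k$, or explicitly flag the correction, rather than assert agreement with the recursion as printed.
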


If we instead want to find $U_k$:s such that  $\Vert f \Vert_{\alpha + V}$ can be compared to the product of the norms $\Vert f \Vert_{\alpha + V'_i}$ for a specific choice of $V'_i$:s, then these $U_k:s$ must satisfy the following recursion relation:
$$
U_1 = -V'_1+ p_1 V, \quad U_{k+1} = -V'_{k+1} + \frac{p_{k+1}}{p_k-1} U_k,
$$
where we have used that $q_k - 1 = 1/(p_{k}-1)$.

The problem, of course, is that we have no control over the final factor $\Vert f \Vert_{\alpha V'},$ where $V' = \frac{1}{p_{n-1}-1} U_{n-1}$.
However, note that if we choose $p_{1} = n$, and $p_{k+1} = p_{k} - 1$, then every factor $\frac{p_{k+1}}{p_k - 1} = 1$, and $V' = U_{n-1}$. 

We denote by $\bar{1}_{-k}$ the multi-index whose $k$:th entry is $0$, and has all other entries equal to $1$. If we now choose $V = (-c, \ldots, -c)$, and $V'_{k} = -2 \bar{1}_{-k}$ for $k = 1, \ldots, n-1$, then the corresponding recursion formula for the $U_k$:s becomes
$$
U_1 = 2 \bar{1}_{-1} + (-cn, \ldots, -cn), \quad U_{k+1} = 2 \bar{1}_{-(k+1)} + U_k, \quad k = 1, \ldots, n-2.
$$ 
This recursion is fairly easy to solve, and it follows that
$$
U_{n-1} = (2(n-2), \ldots, 2(n-2), 2(n-1)) + (-cn, \ldots -cn).
$$
By setting $c = 2(n-1)/n$, we see that
$$
V' = U_{n-1} = (-2, \ldots, -2, 0) = -2 \bar{1}_{-n}.
$$
By applying this in Corollary \ref{monsterfac}, we see that
$$
\Vert f \Vert_{\alpha + (-c, \ldots, -c)} \leq \prod_{i=1}^n \Vert f \Vert_{\alpha - 2 \bar{1}_{-i}}^{1/c_i} ,
$$
where $c= -\frac{2(n-1)}{n}$ and for some constants $c_i$. In particular, we obtain the following Lemma.

\begin{lemma} \label{LMS11.2}
If $f \in \bigcap_{i=1}^n \mathcal{D}_{\alpha - 2 \bar{1}_{-i}}$ then $f \in \mathcal{D}_{\alpha - c}$ where $c = 2 - \frac{2}{n}$.
\end{lemma}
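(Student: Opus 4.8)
The plan is to read the Lemma off directly from the inequality established in the running discussion immediately preceding its statement, which is itself a specialization of Corollary \ref{monsterfac}. So the actual content of the proof is to justify that the particular choice of parameters made there is admissible and produces exactly the displayed estimate, after which the conclusion is automatic.

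First I would fix the choices $p_1 = n$ and $p_{k+1} = p_k - 1$, so that $p_k = n - k + 1$ and in particular $p_{n-1} = 2 > 1$, confirming that all the $p_k$ lie in the admissible range required by Corollary \ref{monsterfac}. The whole point of this choice is that the troublesome multiplier $\frac{p_{k+1}}{p_k - 1}$ appearing in the recursion for the $U_k$ collapses to $1$ at every step, and that $\frac{1}{p_{n-1}-1} = 1$, so the final exponent-carrying multi-index $V' = \frac{1}{p_{n-1}-1}U_{n-1}$ reduces to $U_{n-1}$ itself. This cancellation is what lets us pin down the otherwise uncontrolled last factor.

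Next, with $V = (-c, \ldots, -c)$ and the prescribed targets $V'_k = -2\bar{1}_{-k}$ for $k = 1, \ldots, n-1$, I would solve the resulting telescoping recursion $U_1 = 2\bar{1}_{-1} + (-cn, \ldots, -cn)$, $U_{k+1} = 2\bar{1}_{-(k+1)} + U_k$, to obtain
$$
U_{n-1} = (-cn, \ldots, -cn) + \sum_{j=1}^{n-1} 2\bar{1}_{-j}.
$$
The only genuine bookkeeping is to observe that each of the first $n-1$ coordinates misses exactly one of the $\bar{1}_{-j}$ and so accumulates $2(n-2)$, whereas the $n$-th coordinate misses none and accumulates $2(n-1)$. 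Setting $c = 2(n-1)/n = 2 - 2/n$ then forces the first $n-1$ coordinates of $V' = U_{n-1}$ to equal $-2$ and the last to equal $0$, i.e. $V' = -2\bar{1}_{-n}$, so that the trailing factor becomes $\|f\|_{\alpha - 2\bar{1}_{-n}}$ and joins the other $n-1$ factors into a single uniform family indexed by $i = 1, \ldots, n$.

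Finally, feeding these data into Corollary \ref{monsterfac} yields $\|f\|_{\alpha - c} \leq \prod_{i=1}^n \|f\|_{\alpha - 2\bar{1}_{-i}}^{1/c_i}$, where the exponents $1/c_i$ are the manifestly positive products of Hölder conjugates delivered by that Corollary. The conclusion is then immediate: if $f \in \mathcal{D}_{\alpha - 2\bar{1}_{-i}}$ for every $i$, each factor on the right is finite, a positive power of a finite nonnegative quantity is finite, and hence $\|f\|_{\alpha - c} < \infty$, that is $f \in \mathcal{D}_{\alpha - c}$. The closest thing to an obstacle is the multi-index arithmetic in the telescoping step: one must check that the $k$-th coordinate is spared at precisely the right stage so that all the off-diagonal coordinates of $V'$ land on $-2$ simultaneously while the $n$-th lands on $0$; once this is verified, the statement is a direct appeal to the already-established Corollary.
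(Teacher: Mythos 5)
Your proposal is correct and follows essentially the same route as the paper: the paper's ``proof'' of this lemma is precisely the discussion preceding its statement, which specializes Corollary \ref{monsterfac} with $p_1 = n$, $p_{k+1} = p_k - 1$ (so every multiplier $\tfrac{p_{k+1}}{p_k-1}$ equals $1$ and $V' = U_{n-1}$), solves the same telescoping recursion to get $U_{n-1} = (2(n-2),\ldots,2(n-2),2(n-1)) - (cn,\ldots,cn)$, and sets $c = 2(n-1)/n$ to force $V' = -2\bar{1}_{-n}$. Your additional checks (that all $p_k > 1$, that the coordinate bookkeeping lands every off-diagonal entry on $-2$ and the last on $0$, and that the exponents in the final product are positive) are exactly the verifications the paper leaves implicit.
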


Our next goal is to show that if $1/p \in \mathcal{D}_{\alpha}$, then the mixed partial derivative of the RIF corresponding to $p$ will satisfy the conditions of Lemma \ref{LMS11.2}, which can be used to obtain information about the RIF. But first, we need the following analog of Lemma $11.1$ in \cite{LMS}.

\begin{lemma} \label{LMS11.1}

If $f = \frac{q}{p}$ where $p$ is a polynomial with no zeros in $\mathbb{D}^n$, then
$$
f \in \mathcal{D}_\alpha \text{ if and only if } \frac{\partial f}{ \partial z_i} \in \mathcal{D}_{\alpha - 2 e_i}, \quad \text{for all } i=1, \ldots, n.
$$
\end{lemma}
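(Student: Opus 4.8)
The plan is to reduce everything to a direct comparison of the power series norms \eqref{pow_norm}, using the hypothesis that $p$ has no zeros in $\mathbb{D}^n$ only to guarantee that $f$ is holomorphic on the whole polydisc and hence admits a Taylor expansion $f(z) = \sum_{k \in \mathbb{N}^n} a_k z^k$ convergent on $\mathbb{D}^n$, with $a_0 = f(0) = q(0)/p(0)$ finite. So the statement is really an assertion about coefficient sequences, and the no-zeros condition enters only through the finiteness of the constant term.

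First I would record how differentiation acts on Taylor coefficients: the coefficient of $z^m$ in $\partial_{z_i} f$ is $(m_i + 1) a_{m + e_i}$. Substituting this into the definition of the norm and reindexing via $k = m + e_i$ gives
$$
\Big\| \frac{\partial f}{\partial z_i} \Big\|_{\alpha - 2 e_i}^2 = \sum_{k : k_i \geq 1} k_i^{\alpha_i} \prod_{j \neq i} (1 + k_j)^{\alpha_j} |a_k|^2,
$$
where the factor $(1+m_i)^{\alpha_i - 2}(m_i + 1)^2 = (1+m_i)^{\alpha_i}$ collapses to $k_i^{\alpha_i}$ after the shift. The key elementary observation is that for $k_i \geq 1$ the ratio $(1 + k_i)^{\alpha_i}/k_i^{\alpha_i} = (1 + 1/k_i)^{\alpha_i}$ lies between two positive constants depending only on $\alpha_i$, so the weight above is comparable, uniformly in $k$, to the weight $\omega(k) := \prod_{j} (1+k_j)^{\alpha_j}$ of $\| f \|_\alpha^2$, restricted to the indices with $k_i \geq 1$.

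From this comparison both implications follow by bookkeeping. For the forward direction, $\| \partial_{z_i} f \|_{\alpha - 2e_i}^2 \lesssim \sum_{k : k_i \geq 1} \omega(k)|a_k|^2 \leq \| f \|_\alpha^2$, so $f \in \mathcal{D}_\alpha$ forces each $\partial_{z_i} f \in \mathcal{D}_{\alpha - 2e_i}$. For the converse, I would use that every multi-index $k \neq 0$ has at least one coordinate $k_i \geq 1$, whence
$$
\| f \|_\alpha^2 = |a_0|^2 + \sum_{k \neq 0} \omega(k) |a_k|^2 \leq |a_0|^2 + \sum_{i=1}^n \sum_{k : k_i \geq 1} \omega(k) |a_k|^2 \lesssim |a_0|^2 + \sum_{i=1}^n \Big\| \frac{\partial f}{\partial z_i} \Big\|_{\alpha - 2 e_i}^2 .
$$
Since $a_0 = f(0)$ is finite, membership of all $\partial_{z_i} f$ in the respective $\mathcal{D}_{\alpha - 2 e_i}$ yields $f \in \mathcal{D}_\alpha$.

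The real content is the uniform weight comparison together with the observation that the single index $k = 0$ is the only one not seen by any of the derivatives, and that this term is finite precisely because of the no-zeros hypothesis. The point to be careful about is that the over-counting in the converse (an index $k$ with several nonzero coordinates is summed once for each such coordinate) only inflates the right-hand side, so the inequality runs in the direction we need; no cancellation, and no tracking of sharp constants, is required.
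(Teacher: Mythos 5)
Your proof is correct. Note that the paper itself contains no proof of Lemma \ref{LMS11.1}: it is stated as the $n$-variable analog of Lemma $11.1$ in \cite{LMS} and simply imported, so the comparison is really with that cited two-variable lemma rather than with an argument written out in this paper. Your route is the direct coefficient one: differentiation shifts Taylor coefficients, the weight $(1+m_i)^{\alpha_i-2}(m_i+1)^2=(1+m_i)^{\alpha_i}$ becomes $k_i^{\alpha_i}$ under the substitution $k=m+e_i$, and the ratio $(1+1/k_i)^{\alpha_i}$ is pinched between two positive constants for $k_i\geq 1$, so that $\Vert \partial_{z_i} f \Vert_{\alpha-2e_i}^2$ is comparable, uniformly, to the part of $\Vert f \Vert_{\alpha}^2$ supported on $\{k: k_i \geq 1\}$. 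Both implications then follow exactly as you say: the forward one by restriction of the sum, the converse by covering $\mathbb{N}^n\setminus\{0\}$ by the sets $\{k: k_i\geq 1\}$ (the overcounting is harmless since all terms are nonnegative) and adding the single term $|a_0|^2=|f(0)|^2$, which is finite because $p(0)\neq 0$. Your write-up also makes explicit something the statement of the lemma obscures: rationality of $f$ is never used, and the hypothesis that $p$ has no zeros in $\mathbb{D}^n$ enters only to guarantee that $f$ is holomorphic on all of $\mathbb{D}^n$, hence has a Taylor expansion there to which the power-series norm \eqref{pow_norm} applies. So your argument proves the equivalence for every holomorphic function on the polydisc and for arbitrary multi-indices $\alpha$, which is strictly more general than the stated lemma and is obtained by entirely elementary means; the cited approach, by contrast, packages the same content as a black box restricted to the rational setting in which it is applied.
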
 

\begin{thm} \label{LMS11.3}
Let $\phi = \tilde{p}/p$ be an RIF on $\mathbb{D}^n$ and suppose $1/p \in \mathcal{D}_\alpha$ for some $\alpha < 0$. Then $\phi \in \mathcal{D}_{\alpha + 2/n}$.
\end{thm}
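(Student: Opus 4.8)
The plan is to reduce the statement, via Lemma \ref{LMS11.2}, to showing that $\phi$ is two degrees smoother in each individual variable, and then to obtain that smoothness from the boundedness of $\phi$ together with the multiplier-algebra description of $\mathcal{D}_\alpha$. Observe that for the isotropic multi-index $\alpha + 2 = (\alpha+2,\dots,\alpha+2)$ one has $(\alpha+2) - 2\bar{1}_{-i} = \alpha + 2e_i$, the multi-index equal to $\alpha$ in every slot except the $i$-th, where it is $\alpha+2$. Hence applying Lemma \ref{LMS11.2} with its parameter set to the isotropic multi-index $\alpha+2$ reduces the theorem to proving $\phi \in \bigcap_{i=1}^n \mathcal{D}_{\alpha + 2e_i}$, since then $\phi \in \mathcal{D}_{(\alpha+2)-c} = \mathcal{D}_{\alpha+2/n}$ with $c = 2 - 2/n$.

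To produce the anisotropic regularity $\phi \in \mathcal{D}_{\alpha+2e_i}$, I would first record two facts, both exploiting that $\alpha < 0$, so that, as recalled earlier, the multiplier algebra of $\mathcal{D}_\alpha$ is exactly $H^\infty(\mathbb{D}^n)$. First, since $\tilde p$ is a polynomial, $\tilde p \in H^\infty(\mathbb{D}^n)$, and writing $\phi = \tilde p \cdot (1/p)$ shows $\phi \in \mathcal{D}_\alpha$. Second, and this is the crux, differentiating the identity $\tilde p = \phi\, p$ gives $\partial_{z_i}\tilde p = (\partial_{z_i}\phi)\,p + \phi\,\partial_{z_i}p$, so that
\[
\partial_{z_i}\phi = \frac{\partial_{z_i}\tilde p - \phi\,\partial_{z_i}p}{p} =: \frac{g_i}{p}.
\]
Because $\phi$ is inner, $\|\phi\|_{H^\infty} \le 1$, while $\partial_{z_i}\tilde p$ and $\partial_{z_i}p$ are polynomials and hence bounded on $\overline{\mathbb{D}^n}$; thus $g_i \in H^\infty(\mathbb{D}^n)$. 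Consequently $\partial_{z_i}\phi = g_i\cdot(1/p)$ is again a bounded multiplier times $1/p \in \mathcal{D}_\alpha$, so $\partial_{z_i}\phi \in \mathcal{D}_\alpha$ for every $i$.

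The final step is to upgrade ``$\phi \in \mathcal{D}_\alpha$ and $\partial_{z_i}\phi \in \mathcal{D}_\alpha$'' to ``$\phi \in \mathcal{D}_{\alpha+2e_i}$''. This is an elementary comparison of the power-series norms in the single variable $z_i$: splitting $\|\phi\|_{\alpha+2e_i}^2 = \sum_k (1+k_i)^{\alpha+2}\prod_{j\neq i}(1+k_j)^\alpha |a_k|^2$ into the terms with $k_i = 0$ and $k_i \ge 1$, the former is dominated by $\|\phi\|_\alpha^2$, while on the latter $(1+k_i)^{\alpha+2} \asymp (1+(k_i-1))^\alpha\, k_i^2$, which is precisely the weight appearing in $\|\partial_{z_i}\phi\|_\alpha^2$; hence $\|\phi\|_{\alpha+2e_i}^2 \lesssim \|\phi\|_\alpha^2 + \|\partial_{z_i}\phi\|_\alpha^2 < \infty$. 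Equivalently one may invoke Lemma \ref{LMS11.1} for $\phi$ with multi-index $\alpha+2e_i$: the conditions $\partial_{z_j}\phi \in \mathcal{D}_{\alpha+2e_i-2e_j}$ for $j \neq i$ are then automatic consequences of the same coefficient comparison and need not be checked separately. Running this for each $i$ gives $\phi \in \bigcap_i \mathcal{D}_{\alpha+2e_i}$, and Lemma \ref{LMS11.2} finishes the proof.

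I expect the genuine obstacle to be the second fact, namely that the numerator $g_i = \partial_{z_i}\tilde p - \phi\,\partial_{z_i}p$ is bounded. This is exactly where the special structure of a RIF enters: the reflection relation $\tilde p = \phi p$ and the unimodularity $\|\phi\|_\infty \le 1$ conspire to make the \emph{a priori} singular quotient $\partial_{z_i}\phi$ equal to a bounded holomorphic function divided by $p$, which is what allows the multiplier property, and hence the hypothesis $\alpha < 0$, to deliver the full gain of two derivatives per variable. Everything else is bookkeeping with the power-series norms.
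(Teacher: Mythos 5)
Your proof is correct, and it takes a genuinely leaner route through the middle of the argument than the paper does. The two proofs share their key ingredients: the identity $\partial_{z_i}\phi = \bigl(\partial_{z_i}\tilde p - \phi\,\partial_{z_i}p\bigr)\cdot(1/p)$ with bounded numerator, so that the multiplier description of $\mathcal{D}_\alpha$ for $\alpha<0$ gives $\phi,\ \partial_{z_i}\phi\in\mathcal{D}_\alpha$ (this is exactly the paper's ``$q/p\in H^\infty$'' step), and Lemma \ref{LMS11.2} as the combinatorial engine. They differ in how the anisotropic information is packaged. The paper descends to the top-order mixed derivative: from $\partial_{z_i}\phi\in\mathcal{D}_\alpha$ it differentiates in the other $n-1$ variables and uses Lemma \ref{LMS11.1} to place $\partial^n\phi/\partial z_1\cdots\partial z_n$ in $\mathcal{D}_{\alpha-2\bar 1_{-i}}$ for every $i$, applies Lemma \ref{LMS11.2} at level $\alpha$, and then uses Lemma \ref{LMS11.1} again, in the integration direction, to climb back up to $\phi\in\mathcal{D}_{\alpha+2/n}$. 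You never leave the level of $\phi$ itself: you apply Lemma \ref{LMS11.2} at the shifted parameter $\alpha+2$, observing that $(\alpha+2)-2\bar 1_{-i}=\alpha+2e_i$, and you pass from ``$\phi,\partial_{z_i}\phi\in\mathcal{D}_\alpha$'' to ``$\phi\in\mathcal{D}_{\alpha+2e_i}$'' by splitting the coefficient sum over $k_i=0$ and $k_i\geq 1$. That bridge is elementary and valid for every holomorphic function, so your argument bypasses Lemma \ref{LMS11.1} entirely --- a real simplification, since that lemma is stated in the paper without proof, and its iterated use in both directions (which in principle spawns derivative conditions in all variables at every step) is the least transparent part of the paper's proof; in your version the rationality of $\phi$ enters in exactly one place, namely the boundedness of the numerator $g_i$. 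One small caution: your closing aside that one could instead invoke Lemma \ref{LMS11.1} with multi-index $\alpha+2e_i$, the conditions for $j\neq i$ being ``automatic,'' is too quick as stated --- those conditions require their own (easy) verification, e.g.\ via $\partial_{z_j}\partial_{z_i}\phi\in\mathcal{D}_{\alpha-2e_j}$, which follows from the free differentiation direction --- but since your main argument never uses that aside, correctness is unaffected.
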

\begin{proof}
We can write $\frac{\partial \phi}{\partial z_i} = \frac{q}{p^2} = \frac{q}{p} \frac{1}{p}$, where, by the quotient rule, $q \in \langle p, \tilde{p} \rangle$, the ideal generated by $p$ and $\tilde{p}$. Since $\phi$ is an RIF, it follows that $q/p \in H^\infty(\mathbb{D}^n)$, and so is a multiplier of $\mathcal{D}_\alpha$. By our assumption that $1/p \in \mathcal{D}_{\alpha}$, it follows that $\frac{\partial \phi}{\partial z_i} \in \mathcal{D}_\alpha$ for every $i = 1, \ldots, n$. By taking the partial derivatives with respect to the other $n-1$ variables, and by applying Lemma \ref{LMS11.1}, it follows that
$$
\frac{\partial^n \phi}{\partial z_i \cdots \partial z_n} \in \mathcal{D}_{\alpha - 2 \bar{1}_{-i}} \quad \text{for every } i = 1, \ldots n.
$$ 
By Lemma \ref{LMS11.2}, it follows that $\frac{\partial^n \phi}{\partial z_i \cdots \partial z_n} \in \mathcal{D}_{\alpha - c}$, where $c = \frac{2(n-1)}{n}$. By applying Lemma \ref{LMS11.1} again, we get that $\phi \in \mathcal{D}_{\alpha - c + 2}$, and since 
$$
-c + 2 = -\frac{2(n-1)}{n} + \frac{2n}{n} = \frac{2}{n},
$$
this finishes the proof. 
\end{proof}

\subsection{Estimates using the Łojasiewicz inequality}

Since Theorem \ref{LMS11.3} is applicable for $\alpha < 0$, and since the integral norm for Dirichlet type spaces with negative parameter $\alpha$ only uses the absolute value of the function $f$ (and no derivatives), this suggests that knowledge about how rapidly $p(z) \ra 0$ when $z \in \mathbb{D}^n$ approaches $Z(p) \cap \mathbb{T}^n$ can be used to determine whether $1/p(z)$ is contained in a $\mathcal{D}_\alpha$ for some $\alpha < 0$. 

One way of quantifying the decay of $p$ is by using the \emph{Łojasiewicz inequality}, which, as expressed in \cite{loj}, states that for a real analytic function $f$ there exist positive constants $C$ and $\alpha$ such that
\begin{equation} \label{loj}
C |f(x)| \geq \text{dist}(x, Z)^\alpha,
\end{equation}
where $Z$ is the zero set of $f(x)$ and $\text{dist}(x,Z) := \inf\{|x - z|: z \in Z \}$.

We have the following theorem, which essentially states that if $\alpha$ is small enough, then inclusion of $1/p(z)$ in certain Dirichlet type spaces follow, which by Theorem \ref{LMS11.3} gives us information about the corresponding RIF.

\begin{thm} \label{loj_thm}
Suppose $\alpha <0$ and that $p(z)$ is a polynomial with only one zero on the boundary of $\mathbb{D}^n$. If there exists a
\begin{equation} \label{loj_cond}
q < \frac{1-\alpha}{2} n
\end{equation}
such that 
$$
|p(z)| \geq C \text{dist}(z, (1, \ldots, 1))^q,
$$
for some $C>0$, then
$$
\tilde{p}/p \in \mathcal{D}_{\alpha+2/n}.
$$
\end{thm}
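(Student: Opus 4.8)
The plan is to deduce everything from Theorem \ref{LMS11.3}: if we can show that $1/p \in \mathcal{D}_\alpha$, then, since $\alpha < 0$, that theorem immediately yields $\phi = \tilde{p}/p \in \mathcal{D}_{\alpha + 2/n}$. So the entire task reduces to establishing $1/p \in \mathcal{D}_\alpha$. Writing $\xi := (1, \ldots, 1)$ for the unique boundary zero, I would use the integral form of the norm available for $\alpha < 0$, namely
$$
\left\Vert \tfrac{1}{p} \right\Vert_\alpha^2 \approx \int_{\mathbb{D}^n} \frac{1}{|p(z)|^2} \prod_{i=1}^n (1-|z_i|^2)^{-1-\alpha}\, dA(z),
$$
so the goal becomes showing this integral is finite. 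Note first that the hypothesis $|p(z)| \geq C\,\operatorname{dist}(z,\xi)^q$ may be assumed to hold on all of $\mathbb{D}^n$: away from $\xi$ the function $|p|$ does not vanish on the compact complement and is therefore bounded below, while $\operatorname{dist}(z,\xi)$ is bounded above by the diameter, so the inequality extends (after shrinking $C$) to the whole polydisc. Consequently $|p(z)|^{-2} \leq C^{-2}\operatorname{dist}(z,\xi)^{-2q}$, and it suffices to bound $\int_{\mathbb{D}^n} \operatorname{dist}(z,\xi)^{-2q}\prod_{i}(1-|z_i|^2)^{-1-\alpha}\, dA(z)$.

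The key manipulation is to separate the variables. Since $\operatorname{dist}(z,\xi)^2 = \sum_{i=1}^n |1-z_i|^2$, the AM--GM inequality gives $\operatorname{dist}(z,\xi)^2 \geq n \prod_{i=1}^n |1-z_i|^{2/n}$, and because $q > 0$ (which is forced by $p(\xi)=0$ together with the lower bound) this yields $\operatorname{dist}(z,\xi)^{-2q} \leq n^{-q} \prod_{i=1}^n |1-z_i|^{-2q/n}$. The integrand then factors over the product measure, and the whole integral is dominated by $n^{-q}J^{n}$, where
$$
J := \int_{\mathbb{D}} (1-|w|^2)^{-1-\alpha}\, |1-w|^{-2q/n}\, dA(w)
$$
is a single one-variable integral. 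It therefore remains only to show $J < \infty$.

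The finiteness of $J$ is the heart of the matter, and I expect it to be the main technical point. This is a Forelli--Rudin type boundary integral: the standard estimate states that $\int_{\mathbb{D}} (1-|w|^2)^{c}\, |1-w|^{-t}\, dA(w)$ converges if and only if $c > -1$ and $t < c+2$. With $c = -1-\alpha$ and $t = 2q/n$ these two conditions read $\alpha < 0$ and $2q/n < 1-\alpha$, i.e.\ $q < \frac{1-\alpha}{2}n$ --- precisely the standing hypothesis $\alpha<0$ together with \eqref{loj_cond}. If one prefers to verify $J$ directly rather than cite the estimate, the substitution $w = 1 - \rho e^{i\theta}$ near $w=1$ gives $1-|w|^2 = \rho(2\cos\theta - \rho)$ and $|1-w| = \rho$, so $J$ splits into an angular integral $\int (\cos\theta)^{-1-\alpha}\, d\theta$, finite exactly when $\alpha < 0$, and a radial integral $\int_0 \rho^{-\alpha - 2q/n}\, d\rho$, finite exactly when $q < \frac{1-\alpha}{2}n$; the contribution away from $w = 1$ is harmless, since there the weight is integrable and $|1-w|^{-2q/n}$ is bounded. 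The one subtlety worth flagging is that $1-|z_i|^2$ and $\operatorname{dist}(z,\xi)$ are not uniformly comparable as $z \to \xi$, but the Forelli--Rudin estimate (equivalently, the separate angular and radial computations above) accounts for this automatically, which is exactly why the clean exponent threshold $q < \frac{1-\alpha}{2}n$ emerges.
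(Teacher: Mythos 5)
Your proposal is correct and follows essentially the same route as the paper: reduce everything to showing $1/p \in \mathcal{D}_\alpha$ and invoking Theorem \ref{LMS11.3}, bound $|p(z)|^{-2}$ by $C\,\text{dist}(z,(1,\ldots,1))^{-2q}$, separate variables via the AM--GM inequality, and settle the resulting one-variable boundary integral --- which the paper handles by exactly the direct $w = 1-\rho e^{i\theta}$ computation you sketch as an alternative to citing the Forelli--Rudin estimate. The only cosmetic difference is that you extend the lower bound on $|p|$ to all of $\mathbb{D}^n$ before separating variables, whereas the paper first localizes to a small polydisc around $(1,\ldots,1)$; the analytic content is identical.
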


\begin{remark}
\normalfont
Note that the exponent $q$ is related to the Łojasiewicz exponent - the smallest value of $\alpha$ such that \eqref{loj} holds for some $C > 0$ - although it might in fact be smaller since we only care about a small part of the zero set and we have special restrictions on how we may approach the zero set.
\end{remark}

\begin{proof}

Assume without loss of generality that this zero is at the point $(1, \ldots, 1)$.  By Łojasiewicz inequality we know that there exists an exponent $q$ such that
$$
|p(z)| \geq C \text{dist}(z, (1, \ldots, 1))^q,
$$

Recall that for $\alpha \leq 0$, an equivalent norm for $\mathcal{D}_\alpha$ is given by
\begin{equation} \label{norm<0}
\int_{\mathbb{D}^n} |f(z)|^2 \prod_{i=1}^n (1 - |z_i|^2)^{-1-\alpha} dA(z).
\end{equation}
We are interested in calculating
$$
\int_{\mathbb{D}^n} \frac{1}{|p(z)|^2} \prod_{i=1}^n (1 - |z_i|^2)^{-1-\alpha} dA(z).
$$
Since $|p(z)| \geq C \text{dist}(z, (1, \ldots, 1))^q$, we have that this is bounded by 
\begin{equation} \label{areaintegral}
C \int_{\mathbb{D}^n} \frac{1}{\text{dist}(z, (1, \ldots, 1))^{2q}} \prod_{i=1}^n (1 - |z_i|^2)^{-1-\alpha} dA(z).
\end{equation}

Since the integrand is bounded in $\mathbb{D}$ outside of any neighbourhood of $(1, \ldots, 1)$, the above integral converges if and only if the corresponding integral over $\mathbb{D}^n \cap D(1,\epsilon)^n$ converges, where $D(1,\epsilon)^n$ is understood to be the polydisk with radius $\epsilon$ for some small $\epsilon>0$ and which is centred around the point $(1, \ldots, 1)$.

This region can be parametrized by setting $z_j = 1 - r_j \cos(v_j) - i r_j \sin(v_j)$ where $0 \leq r_j < \epsilon$ and where $v_j$ is such that $1 - r_j \cos(v_j) - i r_j \sin(v_j) \in \mathbb{D}$. The integration limits for $v_j$ are found by solving
$$
x^2 + y^2 = 1, \quad (1-x)^2 + y^2 = r_j^2 \Ra x_j = 1 - r_j^2/2,
$$
and so
\begin{equation} \label{max_cos}
\cos(v_j) = \frac{r_j^2/2}{r_j} = r_j/2
\end{equation}
gives the maximal and minimal values of $v_j$, which means that
$$
- \arccos(r_j/2) \leq v_j \leq \arccos(r_j/2).
$$

This gives us that the above integral converges if and only if

\begin{equation} \label{big_integral}
\int_0^\epsilon \int_{-\cos^{-1}(\frac{r_1}{2})}^{\cos^{-1}(\frac{r_1}{2})} \cdots \int_0^\epsilon \int_{-\cos^{-1}(\frac{r_n}{2})}^{\cos^{-1}(\frac{r_n}{2})} \frac{\prod_{j=1}^n r_j \prod_{j=1}^n (2 r_j \cos(v_j) - r_j^2)^{-1-\alpha}}{|\sum_{j=1}^n r_j^2|^{q}} d v_n d r_n \cdots d v_1 d r_1,
\end{equation}
is finite, where $\prod_{j=1}^n r_j$ is the Jacobian, $\text{dist}(z, (1, \ldots, 1))^q = \left|\sum_{j=1}^n r_j^2 \right|^{q}$ and
$$
\prod_{j=1}^n (1-|z_j|^2)^{-1-\alpha} = \prod_{j=1}^n (2 r_j \cos(v_j) - r_j^2)^{-1-\alpha}.
$$
By the AM-GM inequality, we have that 
$$
\left|\sum_{j=1}^n r_j^2 \right| \geq n \left|\prod_{j=1}^n r_j^{2/n} \right|,
$$
and so
$$
\eqref{big_integral} \leq \int_0^\epsilon \int_{-\cos^{-1}(\frac{r_1}{2})}^{\cos^{-1}(\frac{r_1}{2})} \cdots \int_0^\epsilon \int_{-\cos^{-1}(\frac{r_n}{2})}^{\cos^{-1}(\frac{r_n}{2})} \frac{\prod_{j=1}^n r_j^{- \alpha} (2 \cos(v_j) - r_j)^{-1-\alpha}}{\left|\prod_{j=1}^n r_j^{2q/n} \right|} d v_n d r_n \cdots d v_1 d r_1.
$$
The above integral is separable, and thus converges if and only if
\begin{equation} \label{fishy}
\int_0^\epsilon \int_{-\cos^{-1}(\frac{r}{2})}^{\cos^{-1}(\frac{r}{2})} \frac{r^{-\alpha}(2 \cos(v) - r)^{-1-\alpha}}{\left| r^{2q/n} \right|} dv dr < \infty.
\end{equation}
We begin by analysing the integral with respect to $v$.

By making the change of variables $2 \cos(v) - r = t$, we see that
\begin{multline*}
\int_{-\cos^{-1}(\frac{r}{2})}^{\cos^{-1}(\frac{r}{2})} (2 \cos(v) - r)^{-1-\alpha} dv = 2 \int_{0)}^{\cos^{-1}(\frac{r}{2})} (2 \cos(v) - r)^{-1-\alpha} dv \\
= \int_0^{2-r}\frac{t^{-1-\alpha}}{\sqrt{1- (t+r)^2/4}} dt,
\end{multline*}
and by dividing up the integral into the intervals $[0,1]$ and $(1, 2-r]$, we see that
$$
\int_0^{1}\frac{t^{-1-\alpha}}{\sqrt{1- (t+r)^2/4}} dt \leq C_1 \int_0^{1} t^{-1-\alpha}dt < \infty
$$
since $\alpha < 0$, and 
$$
\int_1^{2-r}\frac{t^{-1-\alpha}}{\sqrt{1- (t+r)^2/4}} dt \leq  \int_1^{2-r}\frac{C_2}{\sqrt{1- (t+r)^2/4}} dt,
$$
and by making the change of variables $w= t+r$, this equals
$$
\int_{1+r}^2 \frac{C_2}{\sqrt{(1-w/2)(1+w/2)}} dw  < C < \infty.
$$

And so \eqref{fishy} holds if 
$$\int_0^\epsilon \frac{r^{- \alpha}}{ r^{2q/n} } dr < \infty \iff - \alpha - 2q/n > -1 \iff n > 2q/(1-\alpha).
$$

This means that every polynomial for which some exponent 
\begin{equation*}
q < \frac{1-\alpha}{2} n
\end{equation*}
works will satisfy that $1/p(z) \in \mathcal{D}_\alpha$. And so, by Theorem \ref{LMS11.3} the corresponding RIF will lie in $\mathcal{D}_{\alpha+2/n}$. 

This finishes the proof. 

\end{proof}

As the next example shows, there are indeed polynomials whose singular set is a single point that can be shown to satisfy \eqref{loj_cond}.

\begin{example}
\normalfont
Consider the polynomials 
$$
p_n(z) = n - \sum_{i=1}^n z_i.
$$

To see that Theorem \ref{loj_thm} is applicable, we must analyze for which choices of $q$ there exist a constant $C$ such that 
$$
\left|n - \sum_{i=1}^n z_i \right| \geq C \text{dist}(z, (1, \ldots, 1))^q.
$$
The problem is of course that the left hand side should not vanish faster than the right hand side when all $z_i \ra 1$, and so we write $z_i = 1 - r_i e^{i v_i}$, where of course $r_i \leq 2$ and the angles must be such that $1 - r_i e^{i v_i}$ lies in the unit disc.
The above inequality now becomes
$$
\left| \sum_{i=1}^n r_i e^{i v_i} \right| \geq C \left( \sum_{i=1}^n r_i^2 \right)^{q/2}.
$$
We will now analyze for which choices of $q$ we have that 
$$
\frac{\left| \sum_{i=1}^n r_i e^{i v_i} \right|}{\left( \sum_{i=1}^n r_i^2 \right)^{q/2}} \geq C > 0,
$$
which is equivalent to showing that the above statement holds for $\sum_{i=1}^n r_i^2 < \epsilon^2$ for some small $\epsilon>0$.

Clearly $\left| \sum_{i=1}^n r_i e^{i v_i} \right| \geq \left| \sum_{i=1}^n r_i \cos(v_i) \right|$, and since all terms are positive, we have that for fixed $r \in B_\epsilon(0)$, this is minimized by choosing the angles as to minimize each term. This in turn is done by choosing the angles as large (or as small) as possible. As we saw in \eqref{max_cos}, for fixed $r_i$ this is achieved by $\cos(v_i) = r_i/2$, and so it follows that
$$
\left| \sum_{i=1}^n r_i \cos(v_i) \right| \geq \left| \sum_{i=1}^n r_i^2/2 \right| = \sum_{i=1}^n r_i^2/2, 
$$
and so
$$
\frac{\left| \sum_{i=1}^n r_i e^{i v_i} \right|}{\left( \sum_{i=1}^n r_i^2 \right)^{q/2}} \geq \frac{\sum_{i=1}^n r_i^2/2}{\left( \sum_{i=1}^n r_i^2 \right)^{q/2}} \geq C > 0
$$
whenever $q \geq 2$.

It follows that
$$
\left|n - \sum_{i=1}^n z_i \right| \geq C \text{dist}(z, (1, \ldots, 1))^q,
$$
holds with $q=2$ in every dimension. Since, $q = 2 < n(1-\alpha)/2 \iff \alpha < 1- \frac{4}{n}$ we see that \eqref{loj_cond} holds for all
$$
\alpha < \min \left(0, 1- \frac{4}{n} \right),
$$  
and so, by Theorem \ref{LMS11.3} for $n \geq 4$ the corresponding RIF lies in 
$\mathcal{D}_\alpha(\mathbb{D}^n)$ for all $\alpha < 2/n$, and for $n=2,3$ the corresponding RIF lies in $\mathcal{D}_\alpha(\mathbb{D}^n)$ for $\alpha < t$, where
$$
t = 1 - \frac{4}{n} + \frac{2}{n} = 1-\frac{2}{n},
$$
which equals $0$ for $n=2$ and $1/3$ for $n=3$.

\end{example}

\section*{Acknowledgements}

The author thanks his PhD advisor Alan Sola for several valuable discussions and ideas.

\end{document}